\documentclass[11pt,reqno]{amsart}
\usepackage[bindingoffset=0.2in,
            left=1.25in,
            right=1.25in,
            top=1.25in,
            bottom=1.25in,
            footskip=.25in]{geometry}
\usepackage[greek, english]{babel}
\usepackage{amsmath}
\usepackage{amsfonts}
\usepackage{amsthm}
\usepackage{mathtools}
\usepackage{float}
\usepackage{hyperref}
\usepackage{xcolor}
\hypersetup{%
  colorlinks=true,%
  citecolor=blue,%
  linkcolor = black%
}  
\usepackage{caption}

\DeclareMathOperator{\nls}{null}

\newcounter{contentcounter}
\counterwithin{contentcounter}{section}

\newtheorem{theorem}[contentcounter]{Theorem}
\newtheorem{lemma}[contentcounter]{Lemma}
\newtheorem{definition}[contentcounter]{Definition}
\newtheorem{prop}[contentcounter]{Proposition}
\newtheorem*{pf}{Proof}

\newtheorem*{remark}{Remark}
\newtheorem*{example}{Example}
\usepackage{graphicx}

\begin{document}
\title[The Evans function and spectral distance]{The Evans function as a lower bound on the spectral distance function}

\author[G.\ Bayliss]{George Bayliss}
\address{University of Illinois, Urbana,
	IL 61801, USA}
\email{bayliss2@illinois.edu}

\author[J.\ Bronski]{Jared C. Bronski}
\address{Department of Mathematics, University of Illinois, Urbana, IL 61801, USA}
\email{bronski@illinois.edu}

\begin{abstract}

The Evans function is an analytic function that encodes information about the intersection of certain subspaces in ODE boundary value problems. As such it is a useful tool for computing the spectrum of boundary value problems arising in the stability of coherent structures. In typical applications one is interested in the roots of the Evans function, but the overall normalization is somewhat arbitrary.  
We present a natural normalization of the Evans function on compact domains such that the magnitude of the Evans function provides a lower bound on the distance to the nearest point in the spectrum. In other words the magnitude of the Evans function at a point in the resolvent set implies that a ball about the point in question lies in the resolvent set. Thus, when appropriately normalized, not only does the Evans function $E(\lambda)$ vanish if and only if $\lambda$ lies in the spectrum of the operator in question, but a non-zero value for the Evans function guarantees that a disk of radius $|E(\lambda^*)|$ about the point $\lambda^*$ lies in the resolvent set.

We present some calculations for some common sets of boundary conditions on a compact interval, and present some numerical experiments for 2nd and 4th order self-adjoint operators and for a linearized modified Korteweg–De Vries equation. 
\end{abstract}

\maketitle

\section{Introduction} \
The Evans function has become a commonplace tool in the study of the eigenvalue problems that arise in the study of nonlinear wave propagation\cite{B2009,OZ2003,S1992,HZ2006,AB2010,LMNT2009,BHLL2018,BNSVW2018}. Originally defined by Evans in his studies of propagation of impulses in models of nerve axons\cite{Evans1,Evans2,Evans3,Evans4} it has since been applied to many 
other types of models including cardiac tissue\cite{CO2004,BJSW2008,H2010,L2025}, water waves\cite{L2000,DP2006,B2013,hur2023unstable,HJ2015}, laser cavities\cite{KKS2004}, and elasticity theory\cite{LLM2011,IL2006,CI2019}, to name but a few. It has become an important theoretical and numerical tool for understanding spectral properties of boundary value problems.\cite{sandstede2002stability,CO2004,KKS2004,BJSW2008,MN2008,CB2014,BHLL2018,BNSVW2018,CLS2024}. Many investigations of the stability of coherent structures such as standing and traveling wave require the computation of an Evans function. 

At its heart the Evans function is an analytic object which detects the intersection of certain subspaces. Typically one constructs the Evans function by considering the subspaces of solutions to some governing ordinary differential equation which satisfy some prescribed boundary conditions at the lefthand and righthand boundaries. The subspaces of boundary conditions are then propagated to a common point; intersections of these subspaces represent a solution to the associated boundary value problem and a zero of the Evans function. 

From the nature of this definition the Evans function is usually treated projectively: the magnitude of the function is not important, only the question of whether the Evans function is zero or non-zero is of consequence. The Evans function detects something topological, the intersection or non-intersection of a particular pair of subspaces. The point of this paper is to point out, for many eigenvalue problems of the sort that arise in applications, that there is a natural scaling that gives a geometric significance to the magnitude of the Evans function. In particular for some eigenvalue problems of the sort that are commonly encountered in the literature there is a natural scaling that can be easily computed that makes the magnitude of the Evans function a lower bound for the spectral distance function. In this scaling a zero of the Evans function still represents an eigenvalue, but a non-zero value of the Evans function $E(\lambda)$ guarantees that a disk of radius $|E(\lambda)|$ in the complex plane is free of eigenvalues. Further this extends in a straightforward way to dependence of the Evans function on parameters. We illustrate our main results with some numerical experiments.  Throughout this paper we assume that the operator ${\mathcal{L}}$ is a differential operator on a interval, satisfying either separated or periodic boundary conditions, and that  ${\mathcal{L}}$ has compact resolvent. As is common, $\rho(\mathcal{L})$ will denote the resolvent set and  $\sigma(\mathcal{L})$ the spectrum. 

Consider the simplest setting, where the boundary conditions are separated. Typically, the eigenvalue equation $\mathcal{L} u = \lambda u$ in some specified domain is equivalent to an ODE on $\mathbb{R}^n$ of the form
\[
u_x = A(x,\lambda) u \qquad\qquad B^{(L)} u(0) = 0 \qquad B^{(R)} u(L) = 0.
\]
Here, $B^{(L)},B^{(R)}$ are $k\times n$ and $(n-k)\times n$ matrices imposing the left and right boundary conditions, respectively, assumed to be of full rank. It will be assumed that the matrix $A(x,\lambda)$ is traceless, and thus the Wronskian is independent of $x$. This is usually true in practice and can always be achieved by an appropriate change of variables. The Evans function may be defined as
\[
E(\lambda) = \det(u^{(L)}(0),u^{(R)}(0)).
\]
Here $u^{(L)}(x)$ is a $(n-k)\times n$ solution to $u_x^{(L)} =A(x,\lambda) u^{(L)}$ where $u^{(L)}(0)$ is a basis for $\nls(B^{(L)})$ and similarly $u^{(R)}(x)$ is a $k\times n$ solution to $u_x^{(R)} =A(x,\lambda) u^{(R)}$ with $u^{(R)}(L)$ a basis for $\nls(B^{(R)}).$ Under very general conditions $E(\lambda)$ vanishes if and only if $\lambda$ is an eigenvalue of the boundary value problem, and the degree of the zero of $E(\lambda)$ is equal to the multiplicity of the eigenvalue. It is well understood that the Evans function encodes topological information regarding the intersection of the spaces of solutions to the differential equation satisfying the left and right boundary conditions. One manifestation of this can be seen in the above construction: $u^{(R)}(L)$ and $u^{(L)}(0)$ can be chosen to be any bases for $\nls(B^{(R)})$ and $\nls(B^{(L)})$, respectively. Different choices lead to different functions $E(\lambda)$ which nevertheless share the same zero set. Several authors have exploited the invariance of the Evans function under a change of basis to develop numerical methods with desirable properties, most notably removing numerical stiffness issues by working over the Steifel manifold\cite{AB2010,HZ2006}

In sections \ref{sec:secondorder} and \ref{sec:thmsandcalc} we make the main observation that for many of the sets of boundary conditions that arise in practice one can choose the normalization of the Evans function in a natural way that encodes more quantitative information  about the intersection of the left and right subspaces. In particular one can choose a normalization such that, not only is $\lambda$ an eigenvalue if and only if $E(\lambda)=0,$ but for $\lambda\in\rho({\mathcal L})$ the magnitude of $E(\lambda)$ is a lower bound for the spectral distance function: if $\lambda \in \rho(\mathcal{L})$ one is guaranteed that the ball 
\[
D_{\lambda}=\{\lambda^* ~:~ \vert \lambda - \lambda^* \vert < |E(\lambda)|~\}
\] necessarily lies in the resolvent set. In section \ref{sec:numexam} we demonstrate that this normalized Evans function can be easily computed numerically, and that it can be used as the basis for more efficient root finding algorithms, since a single computation allows one to eliminate an open subset of $\lambda$ values from consideration. 

\section{A second order problem} \label{sec:secondorder}
Throughout this paper we will adopt the following conventions: $\mathcal{L}$ will denote a linear differential operator with smooth bounded coefficients together with an appropriate set of boundary conditions. 
\begin{definition}
    Given an operator $\mathcal{L}$ and a $\lambda\in \mathbb{C}$ we can define the resolvent as the operator $(\mathcal{L} - \lambda)^{-1}$ whenever this operator exists as a bounded operator on $L_2(I).$ We say that $\lambda$ is in the resolvent set of $\mathcal{L}$ wherever $(\mathcal{L} - \lambda)^{-1}$ exists in this sense, and the spectrum is the complement of the resolvent set.
\end{definition}

Throughout this paper the operators $\mathcal{L}$ considered will be ordinary differential operators on a compact interval. As such they have, under very mild hypotheses, compact resolvent, 
and this will generally be assumed throughout the rest of the paper.
The next lemma is well-known, and will be used throughout the paper.

\begin{lemma}
\label{lem:SpectralDistance}
    If a point $\lambda^*$ lies in the resolvent set $\lambda \in \rho(\mathcal{L})$ and $d(\lambda^*)$ denotes the distance from $\lambda^*$ to the spectrum $\sigma(\mathcal{L})$ then $$d(\lambda^*) \geq \Vert(\mathcal{L}-\lambda^*)^{-1}\Vert^{-1}$$
\end{lemma}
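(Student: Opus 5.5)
The plan is the standard Neumann series argument. We show that every $\mu$ with $|\mu-\lambda^*| < \Vert(\mathcal{L}-\lambda^*)^{-1}\Vert^{-1}$ lies in $\rho(\mathcal{L})$. Then every point of $\sigma(\mathcal{L})$ lies at distance at least $\Vert(\mathcal{L}-\lambda^*)^{-1}\Vert^{-1}$ from $\lambda^*$, which is exactly the claimed bound on $d(\lambda^*)$.

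Write $R = (\mathcal{L}-\lambda^*)^{-1}$, a bounded operator on $L_2(I)$ mapping onto the domain of $\mathcal{L}$. For $\mu\in\mathbb{C}$ we factor
\[
\mathcal{L}-\mu = (\mathcal{L}-\lambda^*) - (\mu-\lambda^*) = (\mathcal{L}-\lambda^*)\bigl(I - (\mu-\lambda^*)R\bigr),
\]
as an identity on the domain of $\mathcal{L}$. If $|\mu-\lambda^*|\,\Vert R\Vert<1$, the Neumann series
\[
\bigl(I-(\mu-\lambda^*)R\bigr)^{-1}=\sum_{k\ge 0}(\mu-\lambda^*)^k R^k
\]
converges in operator norm to a bounded inverse. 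We then set
\[
S = \bigl(I-(\mu-\lambda^*)R\bigr)^{-1}R = \sum_{k\ge0}(\mu-\lambda^*)^kR^{k+1},
\]
which is bounded. One checks directly that $(\mathcal{L}-\mu)S = I$ on $L_2(I)$ and $S(\mathcal{L}-\mu)=I$ on the domain of $\mathcal{L}$. For the second identity it is convenient to use the alternative factorization $\mathcal{L}-\mu = \bigl(I-(\mu-\lambda^*)R\bigr)(\mathcal{L}-\lambda^*)$, and to note that $R$ commutes with the series. Hence $\mu\in\rho(\mathcal{L})$.

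Since this holds for every $\mu$ in the open disk of radius $\Vert R\Vert^{-1}$ about $\lambda^*$, that disk is contained in $\rho(\mathcal{L})$, and the distance to the spectrum is at least $\Vert R\Vert^{-1}$. The only point requiring any care is the domain bookkeeping for the unbounded operator $\mathcal{L}$, namely verifying that $S$ is a two-sided inverse and that its range lies in the domain of $\mathcal{L}$. The range condition follows because the range of $S$ is contained in the range of $R$, which is the domain of $\mathcal{L}$.
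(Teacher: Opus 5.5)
Your proof is correct and is the same argument as the paper's, which simply notes that the geometric series $\sum_{k\ge0}(\lambda-\lambda^*)^k(\mathcal{L}-\lambda^*)^{-(k+1)}$ (exactly your $S$) converges on the open disk of radius $\Vert(\mathcal{L}-\lambda^*)^{-1}\Vert^{-1}$ and gives the resolvent there. Your extra bookkeeping for the unbounded operator (that $S$ maps into the domain of $\mathcal{L}$ and is a two-sided inverse there) fills in details the paper leaves implicit.
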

\begin{proof}
This follows immediately from the convergence of the geometric series  
\[
{(\mathcal{L}-\lambda)}^{-1} = \sum_{k=0}^\infty (\lambda-\lambda^*)^{k}{(\mathcal{L}-\lambda^*)}^{-(k+1)}
\]
in the open disk $|\lambda-\lambda^*| < \Vert(\mathcal{L}-\lambda^*)^{-1}\Vert^{-1}.$ 
\end{proof}

\subsection{Main Idea}
Consider the second order eigenvalue problem 
\[
\left(\mathcal{L} - \lambda\right)y = 0
\]
where $\mathcal{L}$ is a second order differential operator on $\mathbb{R}$ with separated homogeneous boundary conditions at $x=0$ and $x=L.$ 

Let $u^{(L)}(x;\lambda)$ denote a non-zero solution to the second order differential equation
\[(\mathcal{L}-\lambda)u^{(L)}(x;\lambda) \]
satisfying the boundary condition at $x=0$, and similarly let $u^{(R)}$ be a solution to the differential equation satisfying the boundary condition at $x=L$. The Evans function in this case is given by the Wronskian evaluated at $x=0,$ $E(\lambda)=u^{(L)}(0;\lambda)u^{(R)}_x(0;\lambda)-u^{(R)}(0;\lambda)u^{(L)}_x(0;\lambda).$\footnote{Properly speaking there are many choices of the Evans function, depending on the scaling chosen and the point at which the Wronskian determinant is evaluated.} For ease of exposition we will assume that the ODE is area preserving and the Wronskian of any two solutions is constant. This is not necessary but simplifies the calculation somewhat, and can always be brought about by a simple change of variables. 

Note that a straightforward calculation using variation of parameters or the Green's function shows that 
\[
(\mathcal{L}-\lambda)^{-1} f = \frac{1}{E(\lambda)}\left(u^{(R)}(x;\lambda)\int_{0}^x u^{(L)}(y;\lambda) f(y) dy + u^{(L)}(x;\lambda) \int_x^L u^{(R)}(y;\lambda) f(y) dy\right)\]
\[=\frac{1}{E(\lambda)} \int_{0}^L G(x,y) f(y) dy 
\]
The Cauchy-Schwartz inequality gives the obvious bound
\[
\Vert(\mathcal{L}-\lambda)^{-1}\Vert_{L_2} \leq \frac{1}{|E(\lambda)|} \left(\iint_{[0,L]^2} G^2(x,y;\lambda) dx dy\right)^{\frac12}
\]
From this it follows that the spectral distance function $d(\lambda)$ satisfies
\[
d(\lambda)\geq \Vert(\mathcal{L}-\lambda)^{-1}\Vert_{L_2}^{-1} \geq \frac{|E(\lambda)|} {\left(\iint_{[0,L]^2} G^2(x,y;\lambda) dx dy\right)^{\frac12}} := \frac{|E(\lambda)|}{W(\lambda)}
\]
The standard estimate in lemma \ref{lem:SpectralDistance} shows that the reciprocal of the norm of the resolvent is a lower bound for the spectral distance function, so if one scales the Evans function by the Hilbert-Schmidt norm of the integral kernel for the Green's functions that provides a lower bound on the spectral distance function. Note that this scaling is natural, in the sense that the resulting Evans function is independent of the scaling of the left and right solutions $y_1$ and $y_2$. In other words under the rescaling of $(u^{(L)},u^{(R)}) \mapsto (\alpha u^{(L)},\beta u^{(R)})$ for $\alpha,\beta \neq 0$, the rescaled Evans function $\frac{E(\lambda)}{W(\lambda)}$ is invariant. Later we will see that in the higher order in which $d$ boundary conditions are imposed at $x=0$ and $n-d$ boundary conditions are imposed at $x=L$ the normalized Evans function is constant on connected components of $\mathsf{GL}(d,{\mathbb R})\times \mathsf{GL}(n-d,\mathbb{R}).$

Further this quantity is a lower bound for the spectral distance function: given a point in the resolvent set $\lambda^*\in\rho(\mathcal{L})$ the ball
\[
D_\lambda = \{ \lambda^* : |\lambda-\lambda^*| <\frac{|E(\lambda^*)|}{W(\lambda^*)}\}
\]
must also lie in the resolvent set. 
In the following section we provide some similar calculations for the case of higher order separated boundary conditions as well as periodic boundary conditions. While these are certainly not the best possible bounds, for reasons to be discussed, they are quite general. 

%\subsection{Scaling of the Evans function}
%A useful property of the Evans function is that it can be scaled while preserving it's analyticity and zeros. Scaling the Evans function by any non-zero constant, or more generally any non-zero analytic function leaves these properties unchanged. In this regard there is no single Evans function associated with any differential operator, different authors will often refer to different functions as \underline{the} Evans function. \\

%\subsection{A quasi-Newton iteration}
This construction suggests a quasi-Newton iteration for finding the eigenvalues of a self-adjoint operator. Since the calculation of the Evans function at a point $\lambda^*$ guarantees that the interval 
\[
(\lambda^* -\frac{|E(\lambda^*)|}{W(\lambda^*)}, \lambda^* +\frac{|E(\lambda^*)|}{W(\lambda^*)} )
\]lies in the resolvent set it immediately suggests an 'adaptive' stepping scheme for finding the eigenvalues. 
Given a real starting point $\lambda_0$ consider the following two iterations with common starting value $\lambda_0$:
\begin{align*}
&
\lambda_{i+1}^{+} = \lambda_i^{+} + \frac{E(\lambda_{i}^{+})}{W(\lambda_i^+)} \\
&
\lambda_{i+1}^{-} = \lambda_i^{-} - \frac{E(\lambda_{i}^{-})}{W(\lambda_i^-)}.
\end{align*}
Note that $E(\lambda_i^\pm)$ necessarily has the same sign for all $i$, which we will henceforth assume to be positive. In this case the sequences $\{\lambda_i^+\}_{i=0}^\infty$ and $\{\lambda_i^-\}_{i=0}^\infty$ are increasing and decreasing respectively. One expects (and we will show) that these iterations will converge to the smallest eigenvalue greater than $\lambda_0$ and the largest eigenvalue smaller than $\lambda_0$ respectively. We will also show that for the case of a second order equation with separated boundary conditions these iterations are quasi-Newton, in the sense that in a neighborhood of an eigenvalue $\lambda$
\[
\frac{|E(\lambda_{i})|}{W(\lambda_i)} = |\lambda_i-\lambda| + O(|\lambda_i-\lambda|^2)
\]
which is sufficient to guarantee quadratic convergence of the iteration. 

\begin{theorem}
    Suppose $\mathcal{L}$ is a real self adjoint ordinary  differential operator and let $\lambda_0 \in \rho(\mathcal{L})$. Assume that $E(\lambda_0)>0$ and define the sequences 
    \begin{equation*}
        \{\lambda^\pm_i\} = \begin{cases}
        \lambda_0 & i = 0\\
        \lambda_{i-1} \pm E(\lambda_{i-1})/W(\lambda_{i-1}) & i > 0,
    \end{cases}\end{equation*}
Now let $\lambda^+ = \min\{x\in \sigma(\mathcal{L})\,|\, x > \lambda_0\}$ and $\lambda^- = \max\{x\in \sigma(\mathcal{L})\,|\, x < \lambda_0\}$. If $\lambda^\pm$ exist then they are the limits of $\{\lambda_i^{\pm}\}$ respectively.
\label{thm:QNewton}
\end{theorem}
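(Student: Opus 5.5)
Treat the $+$ sequence; the $-$ case is symmetric. Write $F(\lambda) = E(\lambda)/W(\lambda)$ on $\rho(\mathcal{L})\cap\mathbb{R}$. The argument has four steps.

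\textbf{Step 1 (the iterates stay in $[\lambda_0,\lambda^+)$).} I argue by induction. Suppose $\lambda_i^+\in[\lambda_0,\lambda^+)$. Then $\lambda_i^+ \in\rho(\mathcal{L})$, since $\lambda^+$ is the smallest spectral point exceeding $\lambda_0$. By the estimate following Lemma \ref{lem:SpectralDistance},
\[
0<|F(\lambda_i^+)|\le d(\lambda_i^+)\le \lambda^+-\lambda_i^+ ,
\]
so $\lambda_{i+1}^+ = \lambda_i^+ + |F(\lambda_i^+)| \le \lambda^+$. Equality is excluded because the ball of radius $|F(\lambda_i^+)|$ about $\lambda_i^+$ is open and lies in the resolvent set, so $\lambda_{i+1}^+\ne\lambda^+$. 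Hence $\lambda_{i+1}^+\in(\lambda_i^+,\lambda^+)$.

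The sign bookkeeping needs care. $E$ is real-analytic and real on $\mathbb{R}$ for a real operator, and it does not vanish on $[\lambda_0,\lambda^+)$. By the intermediate value theorem it keeps the sign of $E(\lambda_0)>0$ there. Since $W>0$, we get $F>0$ on this interval, and the step really is $+|F|$. This is the claim in the text that $E(\lambda_i^\pm)$ keeps the same sign.

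\textbf{Step 2 (monotone convergence).} The sequence is strictly increasing and bounded above by $\lambda^+$. So it converges to some $\mu\le\lambda^+$, and $F(\lambda_i^+) = \lambda_{i+1}^+-\lambda_i^+\to 0$.

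\textbf{Step 3 ($\mu=\lambda^+$).} Suppose instead $\mu<\lambda^+$. Then $\mu\in\rho(\mathcal{L})$, so $E(\mu)\neq0$. I need $F$ to be continuous at $\mu$; then $F(\lambda_i^+)\to F(\mu)>0$, which is a contradiction. Continuity holds for $E$, which is analytic in $\lambda$. It also holds for $W(\lambda)=\|G(\cdot,\cdot;\lambda)\|_{L^2([0,L]^2)}$. The kernel $G$ is built from $u^{(L)}(x;\lambda)$ and $u^{(R)}(x;\lambda)$, which are solutions of an ODE with initial data independent of $\lambda$. These depend continuously (indeed analytically) on $\lambda$, uniformly for $x\in[0,L]$, so the Hilbert–Schmidt integral is continuous in $\lambda$. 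Moreover $W(\mu)<\infty$, and $W(\mu)>0$ because $G\not\equiv 0$ (the $u$'s are nonzero solutions). Therefore $\mu=\lambda^+$.

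\textbf{Step 4 (the decreasing sequence).} The argument for $\{\lambda_i^-\}$ is identical with inequalities reversed. One point needs checking: $E$ has the same positive sign on $(\lambda^-,\lambda_0]$, so the step $-F$ moves left by $|F|$.

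The main obstacle is the sign issue in Step 1. If $E(\lambda_0)>0$ but $E$ were negative somewhere on the relevant interval, the iteration would step the wrong way. Resolving this requires using that $E$ is real-valued and continuous on the real axis and has no zeros between $\lambda_0$ and $\lambda^\pm$. The latter holds because zeros of $E$ coincide with the spectrum, which is the stated property of the Evans function. The continuity of $W$ and its positivity at $\mu$ in Step 3 is the other point needing care, but it is routine from continuous dependence of ODE solutions on parameters.
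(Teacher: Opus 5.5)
Your proof follows the paper's route. The spectral-distance bound keeps the increasing iterates below $\lambda^+$. Monotone convergence gives a limit $\mu$ with $E(\lambda_i^+)/W(\lambda_i^+)=\lambda_{i+1}^+-\lambda_i^+\to 0$. Continuity of $E/W$ then forces $E(\mu)=0$, so $\mu=\lambda^+$. Two of your additions improve on the paper's sketch. The intermediate-value argument showing $E>0$ on $[\lambda_0,\lambda^+)$ justifies what the paper simply assumes (``under the assumption that $E(\lambda_1)>0$''). You also check that $W$ is continuous and nonzero at $\mu$, which the paper only asserts.

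One justification is wrong, though. You rule out $\lambda_{i+1}^+=\lambda^+$ because the open disc of radius $|F(\lambda_i^+)|$ about $\lambda_i^+$ lies in $\rho(\mathcal{L})$. But $\lambda_{i+1}^+=\lambda_i^++|F(\lambda_i^+)|$ is on the boundary of that disc, so its openness says nothing. By itself, $|F|\le d$ permits $d(\lambda_i^+)=|F(\lambda_i^+)|$. This matters because your induction needs $\lambda_{i+1}^+\in\rho(\mathcal{L})$, so that $F(\lambda_{i+1}^+)$ is defined and positive. (The paper only records $\lambda_i\le\lambda^+$ and does not address this point.)

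Here is a correct argument. Since $G/E$ is the resolvent kernel and $G$ is real for real $\lambda$, you have $|F(\lambda)|=1/\|(\mathcal{L}-\lambda)^{-1}\|_{HS}$. For self-adjoint $\mathcal{L}$, $\|(\mathcal{L}-\lambda)^{-1}\|=1/d(\lambda)$. Also $\|(\mathcal{L}-\lambda)^{-1}\|_{HS}^2=\sum_n|\lambda_n-\lambda|^{-2}>d(\lambda)^{-2}$, because there is more than one eigenvalue. Hence $|F(\lambda_i^+)|<d(\lambda_i^+)$ strictly, and the iterates never reach $\lambda^+$.

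Alternatively, if an iterate did land on $\lambda^+$, the sequence would be constant from then on. That route, however, requires $W(\lambda^+)\neq 0$. This fails at eigenvalues of geometric multiplicity greater than one, where the adjugate defining $P$ vanishes and $G\equiv 0$.
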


\begin{pf}
    We prove the case for $\{\lambda^+_i\}$, under the assumption that $E(\lambda_1)>0$; the other cases follow mutatis mutandis. Every element of the sequence satisfies $\lambda_i = \lambda_{i-1} + E(\lambda_{i-1})/W(\lambda_{i-1}) \leq \lambda_{i-1} + d(\lambda_{i-1}) \leq \lambda^+$, the first inequality follows from Theorem 2.1 and the second from the definition $\lambda^+$. Since the sequence increasing it must have a limit $\tilde{\lambda}\leq\lambda^+$. Now 
    $$
        \lim_{i\to \infty} \frac{E(\lambda_i)}{W(\lambda_i)} =  \lim_{i\to \infty} (\lambda_{i+1}-\lambda_{i}) = 0  
    $$
    however $\frac{E}{W}$ is continuous, so 
   $$\lim_{i\to \infty} \frac{E(\lambda_i)}{W(\lambda_i)} = \frac{E(\lambda)}{W(\lambda)} = 0.$$
   The second equality is only the case if $E(\lambda) = 0$ so we conclude that $\lambda = \lambda^+$.
\end{pf}

%\noindent \textbf{Remark} The above result has an analog for $\mathcal{L}$ not self adjoint. One can construct sequence $\{\lambda^{\pm}_i\}$ along any line that passes through $\lambda$. \textcolor{magenta}{Not sure exactly what is being claimed here but I'm not sure it is right. If the spectrum is discrete a typical line with not have any eigenvalues that lie on it.  }

The next result is specific to the case of a second order equation with separated boundary conditions, and shows that in a neighborhood of an eigenvalue $\lambda^*$ the normalized Evans function is, to leading order, equal to the spectral distance function. An obvious consequence of this is that the iteration above is quasi-Newton, in the sense convergence of the iteration to the eigenvalue is quadratic. We state the following result here, although we defer the proof to a later section.   

\begin{theorem} \label{thm:derivative}
    For a second order operator with separated boundary conditions, then for $\lambda \in \sigma(\mathcal{L})$
    \begin{equation}
        \frac{d}{d\lambda}\frac{E(\lambda)}{W(\lambda)} = \pm 1.
    \end{equation}
\end{theorem}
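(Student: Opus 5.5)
The plan is to use $E(\lambda^*)=0$ at an eigenvalue $\lambda^*\in\sigma(\mathcal{L})$, so that
\[
\frac{d}{d\lambda}\frac{E}{W}\Big|_{\lambda^*}=\frac{E'(\lambda^*)}{W(\lambda^*)},
\]
which holds provided $W$ is continuous and nonzero at $\lambda^*$. I would then compute the numerator and the denominator separately and show that both equal $|c|\,\Vert\phi\Vert_{L_2}^2$ up to sign. Here $\phi$ is the eigenfunction and $c$ is the proportionality constant between the left and right solutions. For definiteness I would work in the normal form $-u''+q(x)u=\lambda u$ with real $q$. By the remark in the Main Idea subsection, this form (constant Wronskian, unit weight) can always be reached by a change of variables. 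I would also normalize $u^{(L)}(0;\lambda)$, $u^{(L)}_x(0;\lambda)$ and $u^{(R)}(L;\lambda)$, $u^{(R)}_x(L;\lambda)$ to be $\lambda$-independent vectors spanning the null spaces of the boundary conditions. This is allowed because $E/W$ is invariant under rescaling of $u^{(L)},u^{(R)}$, as noted after the Cauchy--Schwarz bound.

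\emph{Denominator.} At $\lambda=\lambda^*$ the left and right solutions are linearly dependent, so $u^{(R)}=c\,u^{(L)}=c\,\phi$ for some $c\neq 0$. The kernel $G(x,y)=u^{(R)}(\max(x,y))\,u^{(L)}(\min(x,y))$ then factorizes as $c\,\phi(x)\phi(y)$. Hence
\[
W(\lambda^*)=\Big(\iint_{[0,L]^2} G^2\,dx\,dy\Big)^{1/2}=|c|\,\Vert\phi\Vert_{L_2}^2>0 .
\]
Since $u^{(L)},u^{(R)}$ depend smoothly on $\lambda$, $W$ is continuous and positive near $\lambda^*$. For real $\lambda$ it is also differentiable, because the integrand is a smooth real square. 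So the quotient rule applies, and the term $E(\lambda^*)W'(\lambda^*)/W^2$ drops out.

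\emph{Numerator.} This is the classical derivative-of-the-Wronskian identity. Write $u=u^{(L)}(\cdot;\lambda)$ and $v=u^{(R)}(\cdot;\lambda)$, so that $E(\lambda)=\mathcal{W}(u,v)$ is independent of $x$, and
\[
E'(\lambda)=\mathcal{W}(\partial_\lambda u,v)+\mathcal{W}(u,\partial_\lambda v),
\]
with both terms evaluated at any point. Differentiating the ODE in $\lambda$ gives $(\partial_\lambda u)''=(q-\lambda)\partial_\lambda u-u$. From this, $\frac{d}{dx}\mathcal{W}(\partial_\lambda u,v)=u v$, and similarly $\frac{d}{dx}\mathcal{W}(u,\partial_\lambda v)=-uv$. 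The $\lambda$-independent initial data make $\partial_\lambda u$ and its derivative vanish at $x=0$, and $\partial_\lambda v$ and its derivative vanish at $x=L$. Therefore $\mathcal{W}(\partial_\lambda u,v)(0)=0$ and $\mathcal{W}(u,\partial_\lambda v)(L)=0$. I would evaluate $E'(\lambda)$ at the single point $x=0$ and integrate the second identity from $0$ to $L$:
\[
E'(\lambda)=\mathcal{W}(u,\partial_\lambda v)(0)=\mathcal{W}(u,\partial_\lambda v)(L)+\int_0^L u v\,dx=\int_0^L u v\,dx .
\]
At $\lambda^*$ this becomes $c\int_0^L\phi^2\,dx$, up to the overall sign fixed by the Wronskian convention. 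Since $\mathcal{L}$ is real and $\lambda^*$ is real, $\phi$ can be taken real, and therefore $E'(\lambda^*)=\pm c\,\Vert\phi\Vert^2$. Dividing by $W(\lambda^*)=|c|\,\Vert\phi\Vert^2$ gives $\pm1$, with the sign equal to that of $c$ (times the convention sign).

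The main obstacle is the reduction and normalization bookkeeping rather than the computation itself. Concretely, this means checking that the Liouville-type change of variables to constant-Wronskian, unit-weight form preserves both $E/W$ and the $L_2$ structure; a weight $w(x)$ would otherwise appear as $\int w\,uv$ against an unweighted Hilbert--Schmidt norm. It also means checking that the $\lambda$-independent choice of boundary data is legitimate for general separated (e.g.\ Robin) conditions. I would handle these issues first by stating the result in the normal form and noting the invariance of $E/W$ under rescaling.
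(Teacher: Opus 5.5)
Your proposal is correct and follows essentially the same route as the paper: at the eigenvalue, $E(\lambda^*)=0$ reduces the derivative to $E'(\lambda^*)/W(\lambda^*)$; the Green's kernel collapses to a rank-one product $u_L(x)u_R(y)$, giving $W(\lambda^*)=\bigl|\int_0^L u_Lu_R\,dy\bigr|$; and $E'(\lambda^*)=\int_0^L u_Lu_R\,dy$. The one difference is cosmetic. You get $E'$ from the Lagrange/Wronskian identity $\frac{d}{dx}\mathcal{W}(u,\partial_\lambda v)=-uv$ with $\lambda$-independent boundary data, whereas the paper differentiates the monodromy matrix by variation of parameters and contracts $B_RM_\lambda N_L$, which is the same computation in integrated form. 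Your final choice to state the result directly for the real normal form $-u''+qu$ matches the paper's implicit assumptions, and it is the right call: a change of variables would generally alter the unweighted Hilbert--Schmidt norm.
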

\begin{proof}
    see following section
\end{proof}

Under very mild assumptions the Evans function is an analytic function of the eigenvalue parameter. Assuming that the weight $W(\lambda)$ is also chosen to be analytic, as will be the case in the construction to follow,  it is clear that in a neighborhood of an eigenvalue, $E(\lambda)=0$,  we have that 
$$\frac{|E(\lambda^*)|}{W(\lambda^*)}=|\lambda^*-\lambda| + O(|\lambda^*-\lambda|^2).$$ This is exactly the relation that must hold in order to have quadratic convergence, as in the Newton iteration. If such a relation holds then for $|\lambda_i-\lambda|\ll 1$ we have that 
\[
|\lambda_{i+1}-\lambda| = O(|\lambda_{i}-\lambda|^2)
\]
In other words quadratic convergence, as in the classical Newton iteration. 

\section{Theorems and Basic Calculations} \label{sec:thmsandcalc}

This section will establish a class of `natural' scalings for the Evans function, which we shall refer to as the weighted Evans function. We will show the weighted Evans function satisfies the property that it's magnitude is always less than the magnitude of the spectral distance function. Finally, we will demonstrate the calculation of the weighted Evans function in some examples.

\subsection{Main Theorem}

We can relate the Evans function to the norm of the resolvent operator using a scaling factor and we recover the lower bound on spectral distance through application of lemma \ref{lem:SpectralDistance}.

\begin{prop}
    For a class of differential operators $\mathcal{L}$ including ones equipped with periodic or separated boundary conditions, there is a non-zero analytic function $W(\lambda)$ such that $\frac{W(\lambda)}{|E(\lambda)|} \geq \left\lVert (\mathcal{L} - \lambda)^{-1}\right\rVert$. \footnote{Here the inequality denotes that if both sides exist then the relation is satisfied and if one side fails to exist then the other side fails to exist as well.} We will call this function the \underline{weighting function}. The function $\frac{E(\lambda)}{W(\lambda)}$ is the weighted Evans function.
\end{prop}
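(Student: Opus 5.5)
The plan is to generalize the second order computation of Section \ref{sec:secondorder}: write the resolvent as an integral operator whose kernel is $G(x,y;\lambda)/E(\lambda)$ with $G$ entire in $\lambda$, and then bound the operator norm by the Hilbert--Schmidt norm. First I rewrite $(\mathcal{L}-\lambda)u=f$ as a traceless first order system $U_x=A(x,\lambda)U+F$, where $F=(0,\dots,0,f/a_n)^T$. I then build the Green's matrix by variation of parameters. Let $\Phi(x,\lambda)=(u^{(L)}(x),u^{(R)}(x))$ be the $n\times n$ fundamental matrix formed from the left and right solution bases, so that $\det\Phi=E(\lambda)$ is independent of $x$. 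For separated boundary conditions, the solution satisfying both boundary conditions is
\[
U(x)=\int_0^L \mathcal{G}(x,y;\lambda)F(y)\,dy,
\]
where
\[
\mathcal{G}(x,y;\lambda)=\begin{cases}\Phi(x)P_R\Phi(y)^{-1}, & y<x,\\ -\Phi(x)P_L\Phi(y)^{-1}, & y>x.\end{cases}
\]
Here $P_L$ and $P_R$ are the coordinate projections onto the $u^{(L)}$ and $u^{(R)}$ columns. The key point is Cramer's rule, $\Phi(y)^{-1}=\operatorname{adj}\Phi(y)/E(\lambda)$. This gives $\mathcal{G}=\widetilde{G}(x,y;\lambda)/E(\lambda)$, where $\widetilde{G}$ is built from products of entries of $\Phi$ and cofactors of $\Phi$, and is therefore analytic in $\lambda$ on all of $\mathbb{C}$. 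The scalar resolvent kernel is the first row, last column entry of $\mathcal{G}$ divided by $a_n(y)$.

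Next, for periodic boundary conditions I use the monodromy matrix $M(\lambda)=\Phi(L)\Phi(0)^{-1}$ and take $E(\lambda)=\det(M(\lambda)-I)$. The periodic Green's matrix involves $(M-I)^{-1}$, and its adjugate formula again gives $\widetilde{G}/E$ with $\widetilde{G}$ analytic. Once the kernel has this form in either case, I define
\[
W(\lambda)=\Bigl(\iint_{[0,L]^2}|\widetilde{G}_{1n}(x,y;\lambda)/a_n(y)|^2\,dx\,dy\Bigr)^{1/2}.
\]
The operator norm of an integral operator is bounded by its Hilbert--Schmidt norm, so $\|(\mathcal{L}-\lambda)^{-1}\|\le W(\lambda)/|E(\lambda)|$, and Lemma \ref{lem:SpectralDistance} then gives the spectral distance bound. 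The convention in the footnote needs checking: when $E(\lambda)=0$ the left side is infinite and $\lambda\in\sigma(\mathcal{L})$, and conversely the resolvent exists exactly when $E\neq0$.

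The main obstacle is the requirement that $W$ be a \emph{non-zero analytic} function. The Hilbert--Schmidt norm as written is only real-analytic in $\lambda$, not holomorphic. I would first try to settle for this: state that $W$ is real-analytic and positive, which is all the bound needs. If a holomorphic weight is really required, the fallback is to replace $|\widetilde{G}|^2$ by $\widetilde{G}(x,y;\lambda)\overline{\widetilde{G}(x,y;\bar\lambda)}$, which is holomorphic and agrees with $|\widetilde G|^2$ for real coefficients on the real axis; I expect this to work cleanly only in the real self-adjoint setting.

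Positivity of $W$ also has to be checked. I would argue that $\widetilde{G}$ cannot vanish identically: at a non-eigenvalue, $\widetilde{G}=E\,\mathcal{G}$, and the resolvent is nonzero, so $\widetilde{G}\not\equiv 0$. At an eigenvalue, $\widetilde{G}$ reduces to a rank-one projection onto the eigenfunction, which I expect is nonzero for simple eigenvalues. Finally, invariance under the change of basis $(u^{(L)},u^{(R)})\mapsto(u^{(L)}\alpha,u^{(R)}\beta)$ follows because both $E$ and $\widetilde{G}$ scale by $\det\alpha\det\beta$.
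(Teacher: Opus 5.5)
Your proposal is correct and follows essentially the same route as the paper. You use variation of parameters to write the Green's matrix as an entire matrix kernel divided by $E(\lambda)$, obtaining the entire numerator from Cramer's rule applied to $\Phi=(u^{(L)},u^{(R)})$, where the paper uses the adjugate of $B_RMN_L$ (resp.\ of $M-I$ in the periodic case), and then the Hilbert--Schmidt bound with $W$ the $L^2([0,L]^2)$ norm of the $(1,n)$ entry. Your caveat that this $W$ is only real-analytic (and in fact vanishes at eigenvalues of geometric multiplicity at least two, where the adjugate is zero) is well taken, and the paper's $W=(\iint G^2)^{1/2}$ shares this limitation: it equals the Hilbert--Schmidt norm, and hence gives the bound, only when $G$ is real, e.g.\ for real coefficients and real $\lambda$.
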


\subsection{Weighting Function for Separated Boundary Conditions} \label{subsec:separatedderivation}

We consider a differential operator $\mathcal{L}$ with homogenous separated boundary conditions. 
We write the associated eigenvalue problem as a system of $n$ differential equations: $u' = A(x,\lambda) u + \vec{f}$ where $A(x,\lambda) \in M_{n\times n}$ and $\vec{f}$ has the function $f$ in the last coordinate and zeros otherwise. We assume that $k$ boundary conditions are imposed at $x=0$ and $n-k$ boundary conditions are imposed at $x=L$. In typical applications $n$ is even and $k=\frac{n}{2}$ although we do not necessarily assume this.  The boundary conditions can be written as $B_{L}u(0) = 0$ and $B_{R}u(L) = 0$ where $B_{L}\in M_{k \times n}$ and $B_R\in M_{n-k\times n}$ for some $0 < k < n$ such that $B_{L}B_{L}^T = I_k$ and $B_{R}B_{R}^T = I_{n-k}$. 
By variation of parameters, the solution to the system is,
$$u(x) = U(x) \int_{0}^{x} U^{-1}(y)f(y)dy + U(x)u_{0}$$
where $U$ is the fundamental matrix solution.
Applying the left boundary condition, 
$$B_{L}u(0) = B_{L}u_{0} = 0.$$
So we can write,
$$u_{0} = N_{L}\tilde{u}_{0}$$
where $N_{L}$ is a basis for the nullspace of $B_{L}$.
Applying the right boundary condition,
$$B_{R}u(L) = B_{R}M\int_{0}^{L} U^{-1}(y)f(y)dy + B_{R}Mu_{0} = 0$$
$$B_{R}Mu_{0} = - B_{R}M\int_{0}^{L} U^{-1}(y)f(y)dy,$$
where $M = U(L)$ is the monodromy matrix.
Substituting the first equation,
$$B_{R}MN_{L}\tilde{u}_{0} = - B_{R}M\int_{0}^{L} U^{-1}(y)f(y)dy.$$
To guarantee a unique solution for $u_{0}$ we need $B_{R}MN_{L}$ invertible, and its determinant is thus an Evans function. So,
$$\tilde{u}_{0} = -(B_{R}MN_{L})^{-1}B_{R}M\int_{0}^{L} U^{-1}(y)f(y)dy,$$
$$u_{0} = - N_{L}(B_{R}MN_{L})^{-1}B_{R}M\int_{0}^{L} U^{-1}(y)f(y)dy.$$
We can write
or 
$$u_{0} = - Q\int_{0}^{L} U^{-1}(y)f(y)dy$$
by letting $Q = N_{L}(B_{R}MN_{L})^{-1}B_{R}M$.  
Altogether we have,
$$u(x) = U(x)(I-Q) \int_{0}^{x} U^{-1}(y)f(y)dy - U(x)Q\int_{x}^{L} U^{-1}(y)f(y)dy.$$
Defining $P$ by\footnote{where $E(\lambda) = 0$ we define $P$ either by the limit or taking adjugates of $B_RMN_L$}
$$Q = \frac{1}{E(\lambda)}P$$
and substituting back in, we obtain the solution,
$$u(x) = \frac{U(x)}{E(\lambda)}\left[(E(\lambda)-P)\int_{0}^{x} U^{-1}(y)f(y)dy - P\int_{x}^{L} U^{-1}(y)f(y)dy \right].$$
Now we let 
$$\mathcal{G}(x,y,\lambda) = \begin{cases} 
    U(x)(E(\lambda)-P)U^{-1}(y) & y < x\\
    -U(x)PU^{-1}(y) & y > x.\\
    \end{cases}$$
And we rewrite the solution as,
$$u(x) = \frac{1}{E(\lambda)}\int_{0}^{L} \mathcal{G}(x,y,\lambda)f(y)dy.$$
Since we want the resolvent map we consider the $1$st coordinate of $u$,
$$u_1(x) = \frac{1}{E(\lambda)}\int_{0}^{L} \sum_i[\mathcal{G}(x,y,\lambda)]_{1i}*[f(y)]_idy.
$$
This is 
$$u_1(x) = \frac{1}{E(\lambda)}\int_{0}^{L} [\mathcal{G}(x,y,\lambda)]_{1n}f(y)dy.$$
Letting 
$$G = \mathcal{G}_{1n}.$$
we have 
$$u_1(x) = \frac{1}{E(\lambda)}\int_{0}^{L} G(x,y) f(y) dy.$$
By the Cauchy-Schwarz inequality we have a pointwise estimate
$$|u_1(x)|\leq \frac{1}{|E(\lambda)|} \left(\int_{0}^{L} G^2(x,y)dy\right)^{\frac12} \Vert f \Vert_2  $$
which immediately gives 
$$\Vert u_1\Vert_2  \leq \frac{1}{|E(\lambda)|}\left(\iint_0^L G^2(x,y) dx dy\right)^{\frac12} \Vert f \Vert_2$$
\begin{equation}\label{eq:resolventestimate}
    \left\lVert (\mathcal{L} - \lambda)^{-1} \right\rVert  \leq \frac{1}{|E(\lambda)|}\left(\int_{0}^{L}\int_{0}^{L} G^2dydx\right)^{\frac{1}{2}}. 
\end{equation}
So a choice weighting function is,
\begin{equation}\label{eq:WeightedEF}W(\lambda) = \left(\int_{0}^{L}\int_{0}^{L} G^2dydx\right)^{\frac{1}{2}}.\end{equation}

Having derived this representation of the Green's function we can now prove the result stated in \ref{thm:derivative}:

\begin{pf}[Theorem \ref{thm:derivative}]
Let $\lambda$ be an eigenvalue. We first calculate the weighting function. Since $E(\lambda) = 0$ we have $\mathcal{G}(x,y,\lambda) = - U(x)PU^{-1}(y)$. 
Now, 
$$G(x,y,\lambda) = [\mathcal{G}(x,y,\lambda)]_{12} = -\sum_i [U(x)N_L]_{1i}[P'B_RMU^{-1}(y)]_{i2}.$$
Here $P'$ is defined such that $\frac{1}{E(\lambda)}P' = (B_RMN_L)^{-1}$ and using the limit when $E(\lambda) = 0$. 
Looking at the dimension of these matrices we have $B_R \in M_{1\times 2},\, M \in M_{2\times 2}$ and $N_L \in M_{2\times 1}$, so $B_RMN_L \in M_{1\times 1}$ and 
$$(B_RMN_L)^{-1} = \frac{1}{\det(B_RMN_L)} = \frac{1}{E(\lambda)}.$$ We conclude that $P' = 1$ so, 
$$G(x,y,\lambda) = -\sum_i [U(x)N_L]_{1i}[B_RMU^{-1}(y)]_{i2}.$$
Further $U(x)N_L \in M_{2\times 1}$ and $B_RMU^{-1}(y) \in M_{1\times 2}$ so we can simply write,
$$G(x,y,\lambda) = -[U(x)N_L]_{1}[B_RMU^{-1}(y)]_{2}.$$
Now we note that $U(x)N_L$ is the unique solution satisfying the left boundary conditions as $B_LU(0)N_L = B_LN_L = 0$. So let $u_L \coloneqq [U(x)N_L]_{1}$. In turn $B_RMU^{-1}(y)$ is of the form $(-u'_R,\, u_R)$ for some function $u_R$ satisfying the differential equation (since we are multiplying $U^{-1}(y)$ by a $1\times 2$ matrix on the left) and since $B_RMU^{-1}(L)N_R = B_RIN_R = B_RN_R = 0$ we find that $u_R$ is the unique solution satisfying the right boundary conditions (since $(-u'_R,\, u_R)N_R = B_R(u_R,u'_R)^T$). So we have, 
$$G(x,y,\lambda) = -u_L(x)u_R(y).$$
However, since $\lambda$ is an eigenvalue, there is a unique solution satisfying both sets of boundary conditions and so we conclude that $u_L$ and $u_R$ are multiples of each other. We write, 
$$G(x,y,\lambda) = -ku_L(x)u_L(y).$$
Thus, 
$$W(\lambda) = \left(\int_0^L\int_0^L k^2u_L(x)^2u_L(y)^2dydx\right)^{1/2}$$
$$ = \left(\int_0^Lku_L(x)^2dx\int_0^L ku_L(y)^2dy\right)^{1/2}$$
$$= \left|\int_0^L u_L(y)u_R(y) dy\right|.$$

Now we will calculate $\frac{d}{d\lambda}E(\lambda).$ We note that 
$(u_1)_\lambda$ satisfies the differential equation, 
$$Au - \lambda u = u_1$$ with boundary conditions $(u_1)_\lambda(0) = (u_1)_\lambda'(0) = 0.$ By the variation of parameters formula we obtain, 
$$(u_1)_{\lambda} = \int_0^x [U(x)U^{-1}(y)]_{12}u_1(y)dy$$
$$(u_1)'_{\lambda} = \int_0^x [U(x)U^{-1}(y)]_{22}u_1(y)dy.$$
So letting $U_{\lambda}(x) \coloneqq \frac{d}{d\lambda} U(x)$ we have, 
$$[U_\lambda]_{ij} = \int_0^x[U(x)U^{-1}(y)]_{i2}[U(y)]_{1j}dy$$
$$[M_\lambda]_{ij} = \int_0^L[MU^{-1}(y)]_{i2}[U(y)]_{1j}dy.$$
Now we can think of $M_{\lambda}$ as a the product of vectors, 
$$M_\lambda = \int_0^L[MU^{-1}(y)]_{*2}[U(y)]_{1*}dy.$$
Then, 
$$B_RM_{\lambda}N_L = \int_0^LB_R[MU^{-1}(y)]_{*2}[U(y)]_{1*}N_Ldy = \int_0^L[B_RMU^{-1}(y)]_{*2}[N_LU(y)]_{1*}dy.$$
Now these are each scalars so we can drop the second subscript,
$$B_RM_{\lambda}N_L = \int_0^L[B_RMU^{-1}(y)]_{2}[N_LU(y)]_{1}dy = \int_0^L u_R(y)u_L(y)dy.$$
Since $B_RM_{\lambda}N_L$ is a scalar, 
$$\frac{d}{d\lambda}E(\lambda) = \det(B_RM_{\lambda}N_L) = \int_0^L u_R(y)u_L(y)dy.$$

Finally we compute the derivative of the weighted Evans function, 
$$\frac{d}{d\lambda} \frac{E(\lambda)}{W(\lambda)} = \frac{W(\lambda)\frac{d}{d\lambda}E(\lambda) - E(\lambda)\frac{d}{d\lambda}W(\lambda)}{W(\lambda)^2} = \frac{\frac{d}{d\lambda}E(\lambda)}{W(\lambda)}.$$
This is, 
$$=\frac{\int u_Lu_Rdy}{|\int u_Lu_R dy|} = \pm 1.$$
\end{pf}

\subsection{Weighting Function for Periodic Boundary Conditions}

Next we derive an appropriate weighting function for the Evans function with periodic boundary conditions. Let $U(x)$ be the fundamental matrix solution to our differential equation. 
Then the variation of parameters solution to $u' = \mathcal{L}u + f$ is 
$$w(x) = U(x)\int_{0}^{x}U^{-1}(y)f(y)dy + U(x)w_{0}.$$
Now we apply the periodic boundary conditions, 
$$w(0) = w_{0}$$
$$W(L) = M\int_{0}^{L}U^{-1}(y)f(y)dy + Mw_{0}.$$
Setting both expressions equal to each other,
$$M\int_{0}^{L}U^{-1}(y)f(y)dy + Mw_{0} = w_{0}$$
$$(M-I)w_{0} = -M\int_{0}^{L}U^{-1}(y)f(y)dy.$$
Plugging back into the variation of parameters solution, it is not hard to see that if we want a unique solution for $w(x)$ to exist we need $(M-I)$ invertible. In fact, the Evans function will be $\det(M-I)$. 
We let $(M-I)^{-1} \eqqcolon \frac{1}{E(\lambda)}Q$ and $P = QM$ then 
$$w(x) = \frac{U(x)}{E(\lambda)}\left[(E(\lambda)-QM)\int_{0}^{x}U^{-1}(y)f(y)dy - QM\int_{x}^{L}U^{-1}(y)f(y)dy\right].$$
Finally, as before we let 
$$\mathcal{G}(x,y,\lambda) = \begin{cases} 
    U(x)(E(\lambda)-P)U^{-1}(y) & y < x\\
    -U(x)PU^{-1}(y) & y > x\\
    \end{cases}$$
so that 
$$w(x) = \frac{1}{E(\lambda)}\int_{0}^{L} \mathcal{G}(x,y,\lambda)f(y)dy.$$
From here the same procedure with Cauchy-Schwarz can easily be applied.

\subsection{A remark on systems and an identity.}

In each of the previous examples we wrote the solution to a $n^{th}$ order equation by rewriting it as a system. In this case the forcing term takes the special form where only the last component is non-zero. For completeness sake we present a result for general systems, which uses an interesting matrix inequality. 

\iffalse
\begin{definition} Let the Hilbert-Schmidt norm of a matrix be ${\left\lVert A \right\rVert}_{H,S} = \left(\sum_{i} \langle Ae_{i},Ae_{i}\rangle\right)^{\frac{1}{2}}$. 
\end{definition}

\noindent \textbf{Fact.}
    For any matrix $A$ we have ${\left\lVert A \right\rVert}_{2} \leq {\left\lVert A \right\rVert}_{HS}$. This is not hard to show using the Cauchy-Schwarz inequality.
\fi
\begin{lemma}
\label{thm:HS}
    Let $A$ be a nonsingular $d\times d$ matrix, and $\Vert A \Vert_{HS}$ the usual Hilbert-Schmidt norm, $\Vert A \Vert_{HS}^2=\sum_{i,j} |A_{ij}|^2.$ Then we have the following inequality relating the Hilbert-Schmidt norm of the matrix to that of the inverse matrix.
    $$
        \left\lVert A^{-1} \right\rVert_{HS} \leq \frac{(\left\lVert A \right\rVert_{HS})^{d-1}}{\vert\det(A)\vert d^{\frac{d-2}2}}.   
        $$
\end{lemma}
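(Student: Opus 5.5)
The plan is to pass to singular values and reduce everything to an AM--GM inequality. Let $\sigma_1\ge \sigma_2\ge\dots\ge\sigma_d>0$ be the singular values of $A$; they are positive because $A$ is nonsingular. We use three standard facts:
\[
\Vert A\Vert_{HS}^2=\sum_{i}\sigma_i^2,\qquad |\det A|=\prod_i\sigma_i,\qquad \Vert A^{-1}\Vert_{HS}^2=\sum_i \sigma_i^{-2}.
\]
The last one holds because the singular values of $A^{-1}$ are the reciprocals $\sigma_i^{-1}$. Write $s_i=\sigma_i^2$ and $S=\sum_i s_i$. Squaring the claimed inequality, it becomes
\[
\sum_{i=1}^d \frac{1}{s_i}\;\le\;\frac{S^{d-1}}{d^{\,d-2}\prod_i s_i},
\]
which, after multiplying through by $\prod_i s_i$, is equivalent to
\[
e_{d-1}(s_1,\dots,s_d)\;\le\;\frac{e_1(s)^{d-1}}{d^{\,d-2}}.
\]
Here $e_k$ denotes the $k$-th elementary symmetric polynomial and $e_{d-1}=\sum_i\prod_{j\ne i}s_j$.

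The key step is this symmetric-function inequality for positive reals. I would prove it with Maclaurin's inequality, which says that $\bigl(e_k/\binom{d}{k}\bigr)^{1/k}$ is nonincreasing in $k$. Applying it with $k=d-1$ and $k=1$ gives
\[
\Bigl(\frac{e_{d-1}}{d}\Bigr)^{1/(d-1)}\le \frac{e_1}{d},
\qquad\text{so}\qquad
e_{d-1}\le d\cdot\frac{e_1^{d-1}}{d^{d-1}}=\frac{e_1^{d-1}}{d^{d-2}},
\]
exactly as required.

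If I want to avoid citing Maclaurin, there is a more elementary alternative. Apply AM--GM separately to each term: the product $\prod_{j\ne i}s_j$ is at most $\bigl((S-s_i)/(d-1)\bigr)^{d-1}$. This leaves the task of showing that $\sum_i (S-s_i)^{d-1}\le d\,(d-1)^{d-1}(S/d)^{d-1}$. That bound is false for a single concentrated $s_i$, so the termwise route fails. I would therefore fall back on Maclaurin, or on the Newton-inequality chain that proves it. This step is the only real content of the proof; it is also the main obstacle, and the one place where the constant $d^{(d-2)/2}$ has to come out sharp.

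Finally I would check the normalization through the equality case. When $A=cI$, the left side of the lemma is $\sqrt d/|c|$. The right side is $(|c|\sqrt d)^{d-1}/(|c|^d d^{(d-2)/2})=\sqrt d/|c|$, so the constant is sharp. For complex $A$ nothing changes, since the singular value decomposition works verbatim.
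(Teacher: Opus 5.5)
Your argument is correct and is essentially the paper's: both pass to the squared singular values and reduce the claim to $e_{d-1}(s)\le e_1(s)^{d-1}/d^{d-2}$. This is exactly the inequality $\bigl(\tfrac1n\sum_i\sigma_i\bigr)^{n-1}\ge\prod_i\sigma_i\cdot\tfrac1n\sum_i\sigma_i^{-1}$ that the appendix proves by induction on $n$ (via AM--HM and a monotonicity argument in the largest variable), whereas you recognize it as the $k=1$ versus $k=d-1$ case of Maclaurin's inequality; in addition, by carrying $\prod_i s_i$ and using the SVD you avoid the appendix's unstated normalization $\det A=1$ and its use of $A^T$ in place of $A^*$, so your version covers complex $A$ directly.
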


\begin{pf}
See appendix.
\end{pf}

\begin{remark}
    We note a couple of things here. First, equality holds in the above for the identity matrix. Secondly, this inequality is somewhat inefficient. The variation of parameters formula involves integrating a matrix element of the matrix $G(x,y,\lambda)$; this element can be bound by the norm the above theorem used to approximate the norm of the inverse, explicitly: 
    $$\left\lVert (\mathcal{L}-\lambda)^{-1} \right\rVert \leq \frac{1}{d^{d-2}|E(\lambda)|} \int\int{\left\lVert U(x)P\right\rVert}_{H,S}^{2} {\left\lVert U(y)\right\rVert}_{H,S}^{2d-2} dy$$
    where $G(x,y,\lambda) = U^{-1}(y)PU(x)$. As one might expect, the result is unfortunately not particularly tight.
\end{remark}

\subsection{Extension of parameters} \label{subsec:extensionofparams}

Often in the study of stability problems one is interested in the dependence on a parameter. It may be, for example, that one is studying the stability of a family of traveling waves that depends on one or more parameters. Another example would be to understand the dependence of the spectrum of a periodic operator on the Floquet exponent. This is an example that we will consider in the next section, but we remark here that one can, with suitable modification, also use the normalized Evans function to guarantee that a region of parameter space is free from eigenvalues. The basic idea is that if the norm of the resolvent composed with the perturbation is sufficient small then one can construct the inverse of the perturbed operator in terms of a geometric series. Specifically we have the well-known

\begin{lemma}\label{extension}
    Let $\mathcal{L},P$ be operators such that $\mathcal{L}^{-1}$ exists.
If $|\mu| * \left\lVert P(\mathcal{L}^{-1})  \right\rVert  < 1$ or $|\mu| * \left\lVert (\mathcal{L}^{-1})P  \right\rVert  < 1$ then $(\mathcal{L} - \mu P)^{-1}$ exists. 
\end{lemma}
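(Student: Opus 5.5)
The plan is to treat $\mathcal{L}-\mu P$ as a perturbation of the invertible operator $\mathcal{L}$ and build its inverse from a Neumann series, as in the proof of Lemma \ref{lem:SpectralDistance}. I would handle the two hypotheses separately, factoring $\mathcal{L}$ out on opposite sides in each case.

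\emph{Case 1: $|\mu|\,\Vert P\mathcal{L}^{-1}\Vert<1$.} Here I factor on the right:
\[
\mathcal{L}-\mu P = (I-\mu P\mathcal{L}^{-1})\mathcal{L}.
\]
Set $T=\mu P\mathcal{L}^{-1}$. Then $\Vert T\Vert<1$, so the series $\sum_{k\ge0}T^k$ converges in operator norm. Its sum is a bounded two-sided inverse of $I-T$, by the usual telescoping $(I-T)\sum_{k=0}^N T^k = I-T^{N+1}\to I$, and the same computation with the factors in the other order. Consequently $\mathcal{L}^{-1}(I-T)^{-1}$ is a two-sided inverse of $\mathcal{L}-\mu P$. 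It is bounded as a composition of bounded operators, since $\mathcal{L}^{-1}$ exists as a bounded operator in the sense of the paper's definition of the resolvent. As an explicit formula,
\[
(\mathcal{L}-\mu P)^{-1}=\sum_{k\ge0}\mu^k\mathcal{L}^{-1}(P\mathcal{L}^{-1})^k,
\]
which converges because $\Vert \mathcal{L}^{-1}\Vert<\infty$.

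\emph{Case 2: $|\mu|\,\Vert\mathcal{L}^{-1}P\Vert<1$.} Here I factor on the left:
\[
\mathcal{L}-\mu P=\mathcal{L}(I-\mu\mathcal{L}^{-1}P).
\]
The same Neumann-series argument inverts $I-\mu\mathcal{L}^{-1}P$. The inverse of $\mathcal{L}-\mu P$ is then $(I-\mu\mathcal{L}^{-1}P)^{-1}\mathcal{L}^{-1}$.

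The main obstacle is domain bookkeeping when $\mathcal{L}$ is an unbounded differential operator. In Case 1 I would assume $P\mathcal{L}^{-1}$ is bounded on $L_2$, which is implicit in the hypothesis. Then both factorizations hold on $D(\mathcal{L})$, and the constructed operator maps $L_2$ into $D(\mathcal{L})$. In Case 2 I would take $D(\mathcal{L}-\mu P)=D(\mathcal{L})\cap D(P)$. I would also assume $\mathcal{L}^{-1}P$ extends to a bounded operator on $L_2$, with the inverse understood on that extension. Checking that the formal inverse is a genuine two-sided inverse in this domain setting is the only step that needs care. For the paper's applications $P$ is bounded (a multiplication or lower-order perturbation), and this point is then immediate.
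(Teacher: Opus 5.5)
Your proposal is correct and matches the paper's proof: a Neumann series inverts $I-\mu P\mathcal{L}^{-1}$ and is then composed as $\mathcal{L}^{-1}(I-\mu P\mathcal{L}^{-1})^{-1}$, and in the second case the paper uses $(I-\mu\mathcal{L}^{-1}P)^{-1}\mathcal{L}^{-1}$. Your domain bookkeeping goes beyond the paper, which ignores domains entirely. One correction to your closing remark: the paper's Floquet example uses the unbounded perturbation $P=\partial_x$, so it is covered by your Case 1 assumption that $P\mathcal{L}^{-1}$ is bounded, not by $P$ itself being bounded.
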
 

\begin{pf} 
Suppose that $|\mu| * \left\lVert P(\mathcal{L}^{-1})  \right\rVert  < 1$.
The following power series thus converges, 
$$
    \sum_{n=0} [\mu P(\mathcal{L}^{-1})]^n = (1 - \mu P(\mathcal{L}^{-1}))^{-1}.
$$
Now, 
$$(\mathcal{L})^{-1}(1 - \mu P(\mathcal{L}^{-1}))^{-1} = [(1 - \mu P(\mathcal{L}^{-1}))\mathcal{L}]^{-1}$$
$$ = [\mathcal{L} - P]^{-1}.$$
So the operator exists. In turn the case with $|\mu| * \left\lVert (\mathcal{L}^{-1})P  \right\rVert  < 1$ proceeds by multiplying 
$$
    (1 - \mu (\mathcal{L}^{-1})P)^{-1}(\mathcal{L})^{-1}.$$
\end{pf}

\begin{example}
    Consider the bounded eigenvalue problem, 
$$-\partial_{xx}u + Q(x)u = \lambda u$$
where $Q(x) = Q(x+L)$. We find that $-\partial_{xx} + (Q(x)-\lambda)$ is periodic in $L$, so by Floquet's theorem we have a solution, 
$$u(x) = e^{\eta x}\phi(x)$$
with $\phi(x) = \phi(x+L)$ and $\eta$ complex. Since we want bounded solutions, $\eta$ cannot have a real part. So we have,
$$u(x) = e^{i\mu x}\phi(x)$$
with $\mu \in [-\pi/L,\pi /L)$. Now we plug this back into the differential equation,
$$-\partial_{xx}e^{i\mu x}\phi(x) + Q(x)e^{i\mu x}\phi(x) = \lambda e^{i\mu x}\phi(x)$$
$$-\left[-\mu^{2}e^{i\mu x}\phi(x) + 2i\mu e^{i\mu x}\phi(x)' + e^{i\mu x}\phi(x)''\right] + Q(x)e^{i\mu x}\phi(x) = \lambda e^{i\mu x}\phi(x)$$
$$-\left[-\mu^{2}\phi(x) + 2i\mu \phi(x)' + \phi(x)''\right] + Q(x)\phi(x) = \lambda\phi(x)$$
$$-(\partial_{x} + i\mu)^{2}\phi(x) + Q(x)\phi(x) = \lambda\phi(x).$$
Letting
$\mathcal{L}[\mu] = -(\partial_{x} + i\mu)^{2} + Q(x),$ we can write 
$$\mathcal{L}[\mu + \Delta \mu] - (\lambda + \Delta \lambda) = \mathcal{L}[\mu] - (\lambda) - 2i(\Delta \mu)\partial_{x} + 2\mu*\Delta\mu + (\Delta \mu)^{2} - \Delta\lambda.$$ 
So supposing that $(\mathcal{L}[\mu] - \lambda)^{-1}$ exists, applying lemma 2.3, if 
$$2|\Delta \mu|\left\lVert \partial_{x}(\mathcal{L}[\mu]- \lambda )^{-1} \right\rVert + \left|(\Delta \mu)^{2} + 2\mu*\Delta\mu - \Delta \lambda\right|\left\lVert (\mathcal{L}[\mu]- \lambda )^{-1} \right\rVert < 1$$
then $(\mathcal{L}[\mu + \Delta \mu] - (\lambda + \Delta \lambda))^{-1}$ exists.

Of particular interest is the calculation of $\left\lVert \partial_{x}(\mathcal{L}[\mu]- \lambda )^{-1} \right\rVert$. By a modification to \ref{subsec:separatedderivation} if 
$$\left\lVert (\mathcal{L}[\mu]- \lambda )^{-1} \right\rVert  \leq \frac{1}{|E(\lambda)|}\left(\int_{0}^{L}\int_{0}^{L} Gdydx\right)^{\frac{1}{2}}$$
according to (\ref{eq:resolventestimate})
then
$$\left\lVert \partial_{x}(\mathcal{L}[\mu]- \lambda )^{-1} \right\rVert  \leq \frac{1}{|E(\lambda)|}\left(\int_{0}^{L}\int_{0}^{L} (\partial_xG)^2dydx\right)^{\frac{1}{2}}.$$
\end{example}

\section{Some numerical examples} \label{sec:numexam}

In this section we present some numerical experiments to illustrate the ideas. 

\subsection{Mathieu equation with Dirichlet boundary conditions}

We begin by computing the weighted Evans for the Dirichlet Mathieu operator $\mathcal{L} = -\partial_{xx} + \cos(x)$ with Dirichlet boundary conditions on $(0,2\pi)$. The operator is self-adjoint so that the eigenvalues lie on the real axis. We will focus on the interval $[2,5]$, as depicted in figure \ref{fig:Fig1}. The blue curve depicts the Evans function computed by smapling. One can see that there are two eigenvalues in this interval, $\lambda\approx 2.3$ and $\lambda\approx 4.0.$ Computing the weighted Evans function at the point $\lambda=3.0$ we find that the magnitude of the weighted Evans function is roughly $0.51$, guaranteeing that the interval $(\sim2.49,\sim3.51)$ is free of eigenvalues.  

\begin{figure}[H]
\centering

  \includegraphics[width=.7\linewidth]{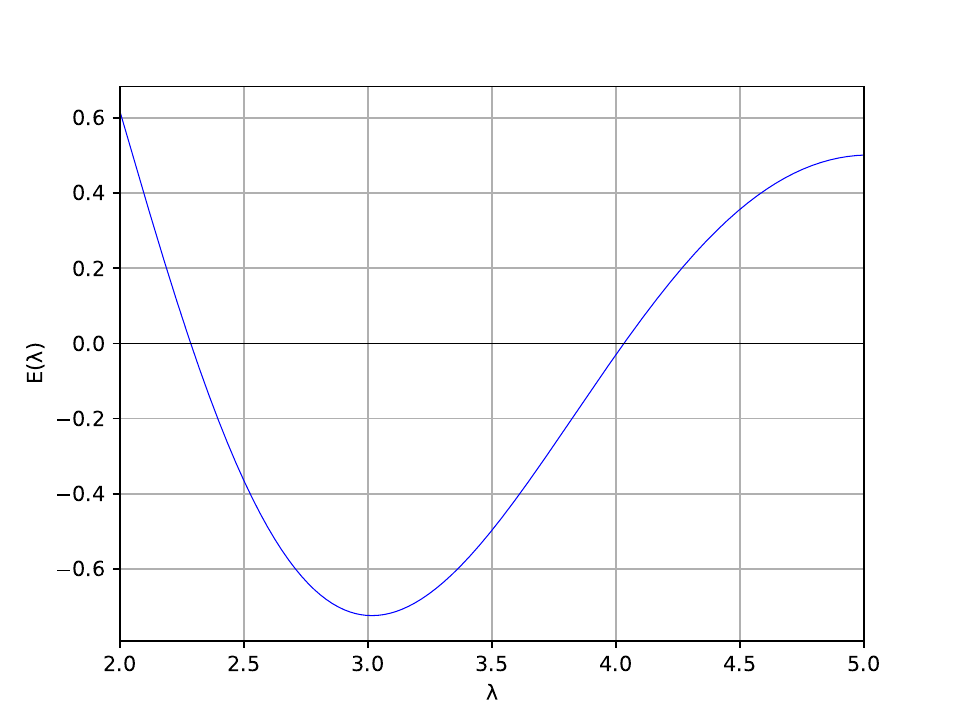}
  \caption{\footnotesize Evans function on $[2,5]$}
  \label{fig:Fig1}
\end{figure}

\noindent Now we compute the weighted Evans function according to (\ref{eq:WeightedEF}) at $\lambda = 3$.

\begin{figure}[H]
    \centering
    \includegraphics[width=.7\linewidth]{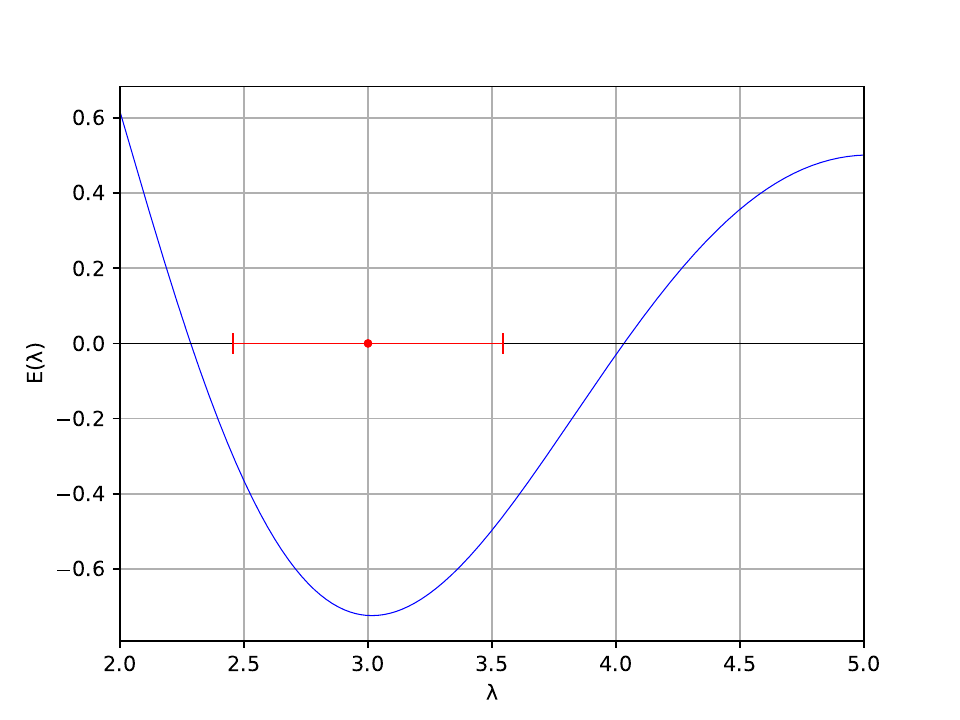}
    \caption{\footnotesize Evans function on $[2,5]$ with weighted Evans function calculated at $\lambda = 3$.}
    \label{fig:Fig2}
\end{figure}

One can easily see that by computing the weighted Evans function, one finds an eigenvalue free region around the specified value of $\lambda$. We can expand this region by calculating the weighted Evans function at the bounds of this region:

\begin{figure}[H]
    \centering
    \includegraphics[width=.7\linewidth]{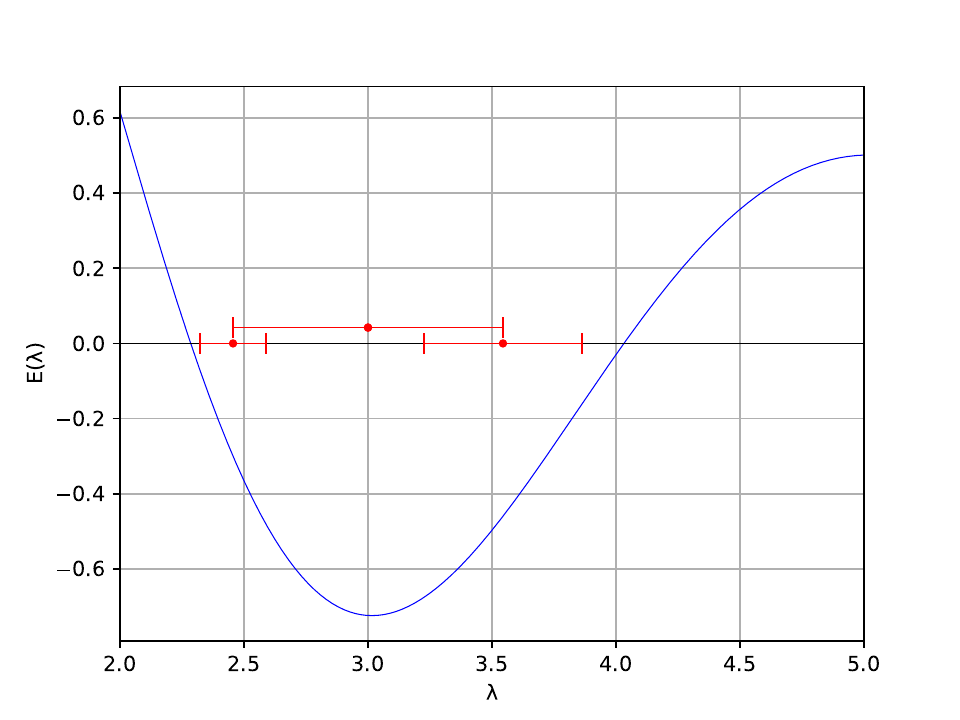}
    \caption{\footnotesize Evans function on $[2,5]$ with weighted Evans function calculated to two iterations at $\lambda = 3$.}
    \label{fig:Fig3}
\end{figure}

According to theorem \ref{thm:QNewton} the quasi-Newton iterations 
\[
\lambda^+_{i+1} = \lambda^+_i + \left\vert\frac{E(\lambda^+_i)}{W(\lambda^+_i)}\right\vert \qquad \qquad \qquad \lambda^-_{i+1} = \lambda^-_i - \left\vert\frac{E(\lambda^-_i)}{W(\lambda^-_i)}\right|
\]
will give sequences $\{\lambda_i^\pm\}_{i=0}^\infty$ which converge to the smallest eigenvalue greater that $\lambda_0$ and the largest eigenvalue less than $\lambda_0$ respectively.  

\subsection{4th-order operator with Dirichlet boundary conditions}

We demonstrate that the process retains its effectiveness in higher dimensions. Consider the operator $\mathcal{L} = \partial_{xxxx} + \partial_x(\cos(x)\partial_x)$ with the boundary conditions $u(0) = u_{xx}(0) = 0$ and $u(L) = u_{xx}(L) = 0$. Let $L = 2\pi$ as before. The Evans function on $[2.5,17.5]$ is:

\begin{figure}[H]
    \centering
    \includegraphics[width=.7\linewidth]{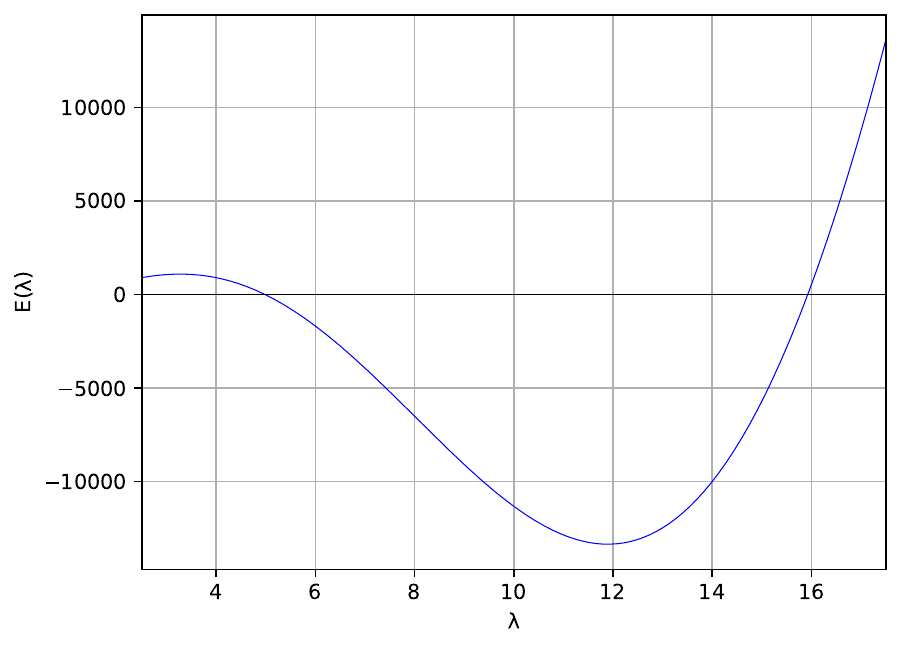}
    \caption{\footnotesize Evans function on $[2.5,17.5]$.}
    \label{fig:Fig4}
\end{figure}

Now we calculate the weighted Evans function about $\lambda = 11$ to three iterations:

\begin{figure}[H]
    \centering
    \includegraphics[width=.7\linewidth]{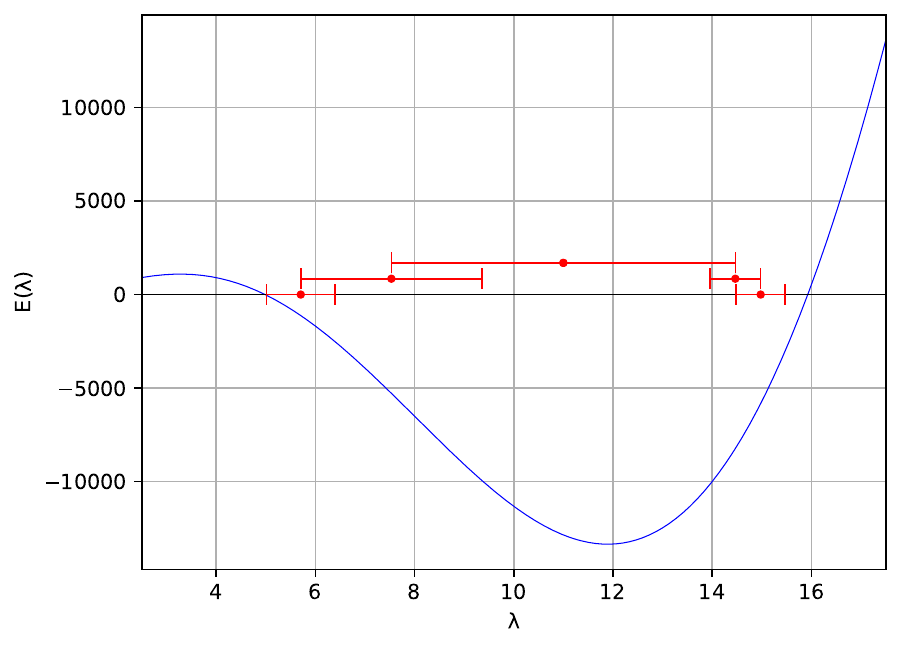}
    \caption{\footnotesize Evans function on $[2.5,17.5]$ with weighted Evans function calculated to three iterations at $\lambda = 11$.}
    \label{fig:Fig5}
\end{figure}

Since the proof of \ref{thm:derivative} assumes a second order operator it does not extend to this case. However, we believe that the theorem may still apply to higher order cases. In figure \ref{fig:Fig6} we plot the weighted Evans function near the eigenvalue at $\approx 5$.

\begin{figure}[H]
    \centering
    \includegraphics[width=.7\linewidth]{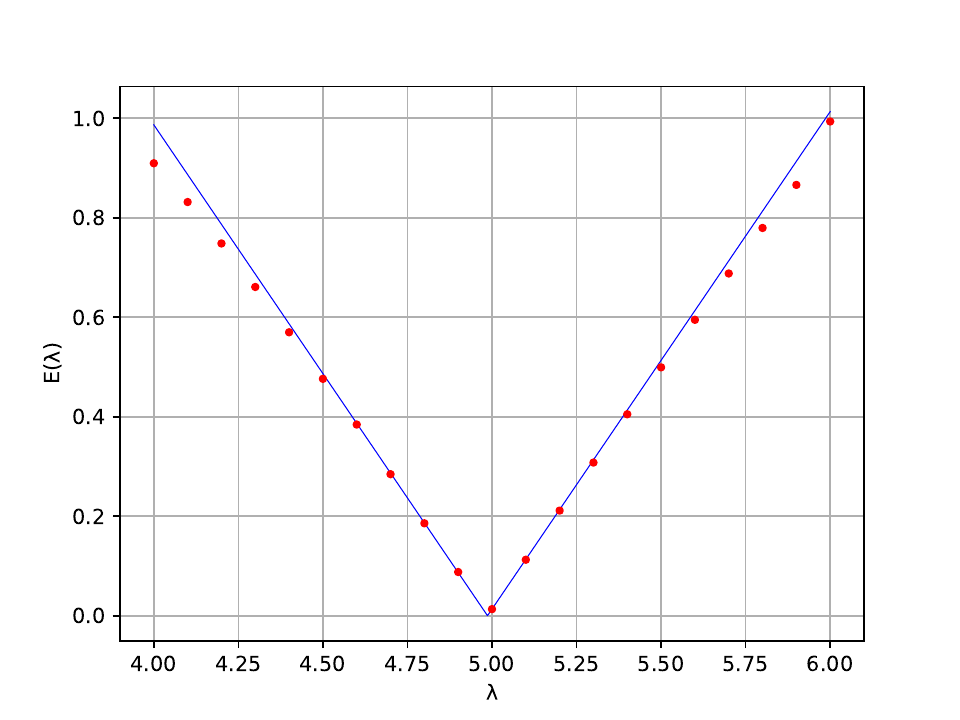}
    \caption{\footnotesize The weighted Evans function plotted at intervals of $.1$ against spectral distance.}
    \label{fig:Fig6}
\end{figure}

\subsection{The linearized mKdV equation}

Now we use this method to study the stability of traveling waves to the generalized KdV equation. For the mKdV equation, the linearized stability of the standing waves is governed by the spectrum of the third order non-self-adjoint operator 
$$\mathcal{L}u = -\partial_{xxx}u + \partial_x(-3\phi^2u)$$
with $\phi$ an elliptic function of $x$. For our example, we choose the Jacobi elliptic function $\textrm{cn}(x;m=\frac{1}{2})$ for the parameter value $m= 1/2$. We can calculate the spectrum via the Floquet-Fourier-Hill method\cite{FFHM} in figure \ref{fig:Fig7}.

\begin{figure}
    \centering
    \includegraphics[width=.7\linewidth]{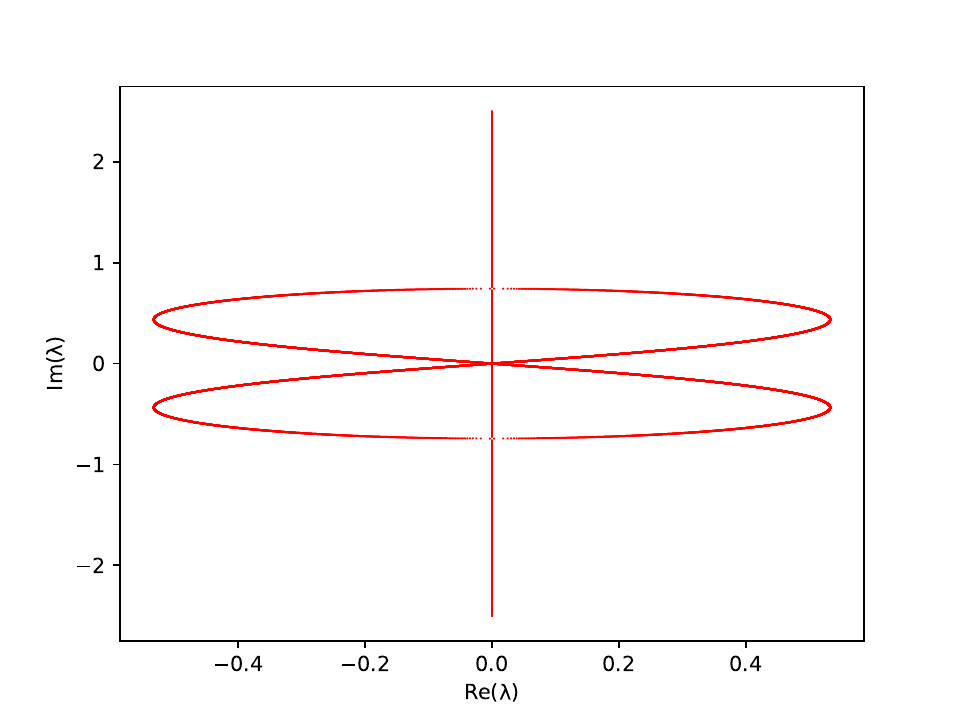}
    \caption{\footnotesize Spectrum of $\mathcal{L}$ in $[-0.6-2.5i, 0.6 + 2.5i]$.}
    \label{fig:Fig7}
\end{figure}

Since $\mathcal{L}$ is not self-adjoint, the spectrum can, in principle, lie anywhere in the complex plane. To calculate the weighted Evans function, we reduce the bounded boundary conditions to periodic ones using Floquet's theorem as described in \ref{subsec:extensionofparams}. We choose $\mu \in [0,2\pi/4K)$ where $K$ is the elliptic integral of the first kind, and consider the associated operator
$$\mathcal{L}(\mu)u \coloneqq -(\partial_x + i\mu)^3u + (\partial_x + i\mu)(-3\phi^2u)$$
with periodic boundary conditions. We will choose $\mu = 0.1$ and $\lambda = 0.1 + 0.5i$.

\begin{figure}
    \centering
    \includegraphics[width=.7\linewidth]{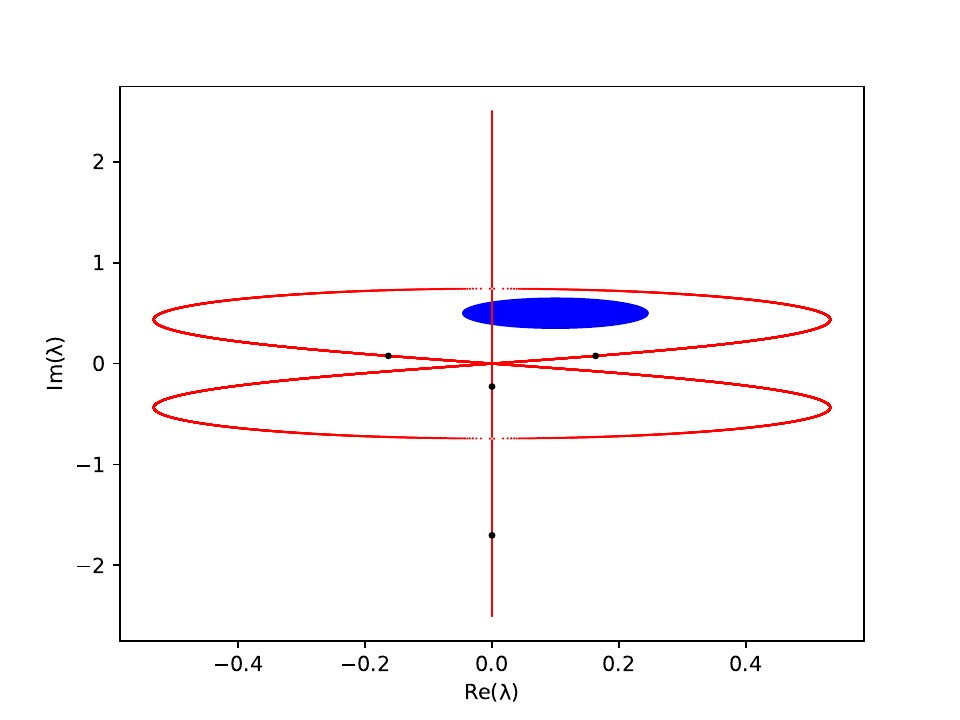}
    \caption{\footnotesize Spectrum of $\mathcal{L}$ in $[-0.6-2.5i, 0.6 + 2.5i]$ with black dots representing spectrum of $\mathcal{L}(\mu)$ and blue circle representing the weighted Evans function at $\lambda = 0.1 + 0.5i$.}
    \label{fig:Fig8}
\end{figure}

Figure \ref{fig:Fig8} represents the essential spectrum $\mathcal{L}$, depicted in red, together with the eigenvalues for $\mu=0.1$ (Black dots). The blue disc represents the guaranteed eigenvalue free region for the Evans function for ${\mathcal L(\mu=0.1)}$ computed at $\lambda = 0.1 + 0.5 i.$ We emphasize that the red curve represents the essential spectrum, which is given by the union of the spectra of $\mathcal{L}(\mu)$ over all $\mu$. The theorem is applied for a fixed $\mu$, so the fact that the blue disc intersects the spectrum does not represent a contradiction: those points represent eigenvalues of $\mathcal{L}(\mu)$ for different $\mu$ values. 

 To recover information about the spectrum of $\mathcal{L}$ we use extension of parameters to compute diamonds in $\lambda$ and $\mu$ in figure \ref{fig:Fig9}.

\begin{figure}
    \centering
    \includegraphics[width=.7\linewidth]{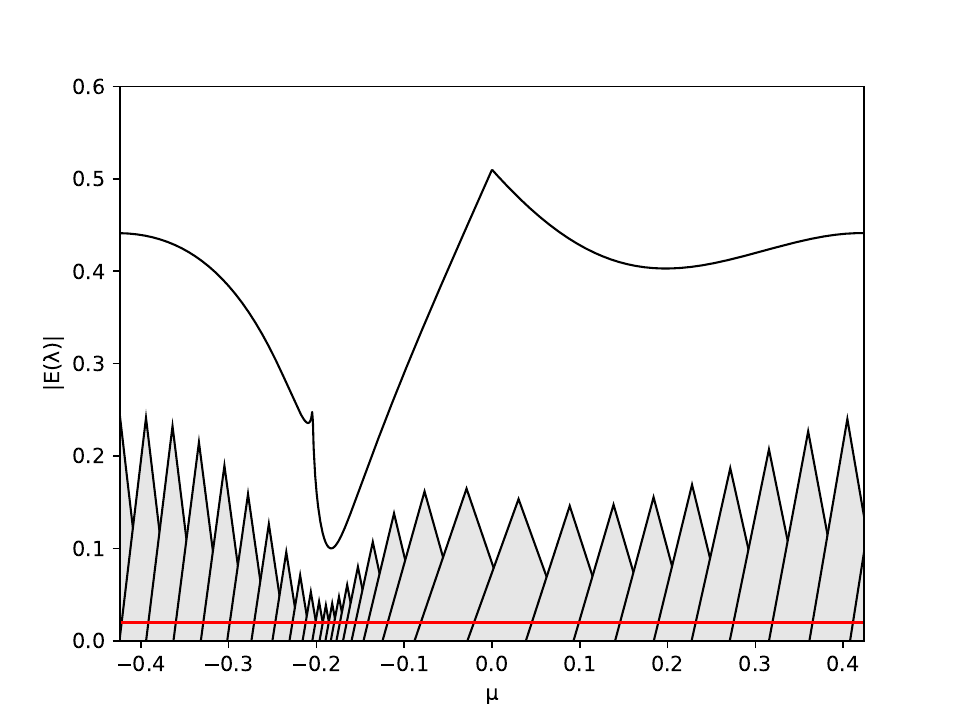}
    \caption{\footnotesize Black line represents spectral distance from $\lambda = 0.1 + 0.5i$ for each value of $\mu$ and grey rectangles represent computations of the weighted Evans function for $\mathcal{L}(\mu)$ at $\lambda = 0.1 + 0.5i$.}
    \label{fig:Fig9}
\end{figure}

By taking the intersection points of the diamonds we calculate the largest possible rectangle that fits in the region covered by the diamonds. The height of this rectangle, $h$, is a lower bound on $\lVert (\mathcal{L} - (0.1 - 0.5i))^{-1}\rVert.$ In the diagram below $h \approx 0.01947$ which produces a region in the spectrum of $\mathcal{L}$.

\begin{figure}
    \centering
    \includegraphics[width=.7\linewidth]{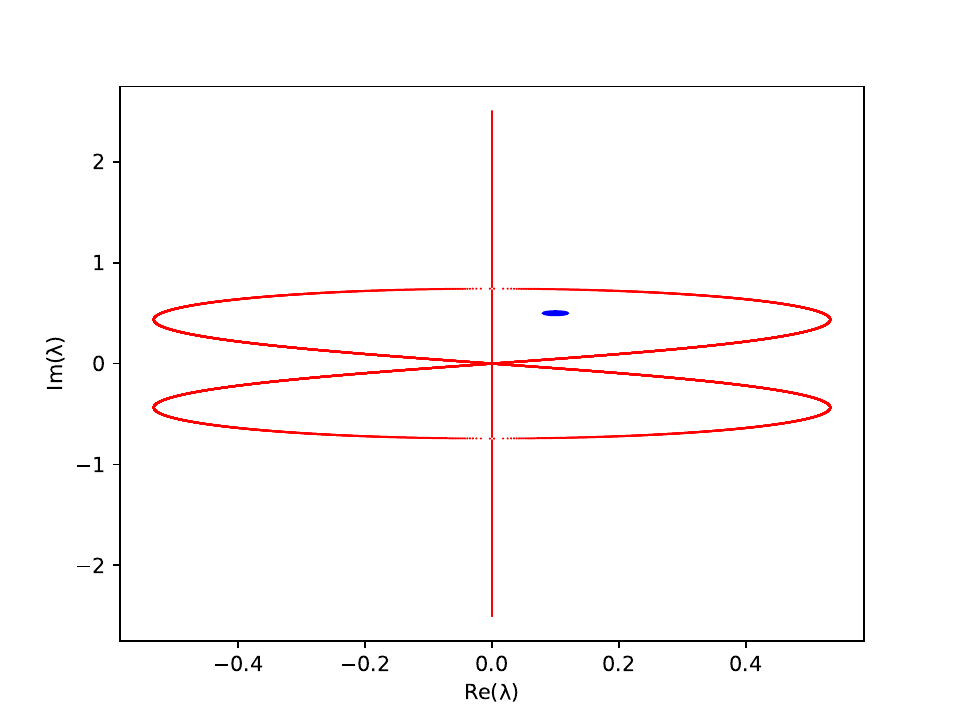}
    \caption{\footnotesize Spectrum of $\mathcal{L}$ in $[-0.6-2.5i, 0.6 + 2.5i]$ with an eigenvalue free region centered at $\lambda = 0.1 + 0.5i$.}
    \label{fig:Fig10}
\end{figure}

\section{Conclusions}

In this paper we have considered the role of the normalization for the Evans function defined by an ODE eigenvalue problem on a compact interval. For a number of problems of the type that arise in practice there is a normalization of the Evans function that is both theoretically simple and numerically easily computable that provides a natural geometric interpretation to the magnitude of the Evans function. In particular the magnitude of the normalized Evans function guarantees a a ball in the spectral or parameter plane that is free of eigenvalues. While the examples here have been given using conventional numerics we see one potentially important application of this as being to rigorous numerics\cite{BJ2022,BBHY2025} In the field of rigorous numerics a common way to show that an interval is free of eigenvalues is by interpolation. One first calculates the Evans function with error bounds at a finite set of points. Given some information on the analyticity properties of a function in a complex region containing the interval one get rigorous bounds on the error in Chebyshev interpolation.
If the error in Chebyshev interpolation is sufficiently small this can be used to the non-existence of roots of the Evans function. This is, of course, a rather involved procedure. The bounds presented here suggest a much simpler alternative. 
\appendix
\section{Proof of Theorem \ref{thm:HS}}
The Hilbert-Schmidt norm of $A$ is 
$$\left\lVert A \right\rVert^{2}_{H,S} = \sum \sigma_{i}$$
where $\sigma_{i}$ are the eigenvalues of $A^{T}A$. Now we note that $A^{T}A$ has the same eigenvalues as $AA^{T}$ which are the reciprocals of the eigenvalues of $(AA^{T})^{-1} = (A^{-1})^{T}A^{-1}.$
So,
$$\left\lVert A^{-1} \right\rVert^{2}_{H,S} = \sum \frac{1}{\sigma_{i}}.$$
Since $\det(A)=1$ we have $\det(AA^{T}) = 1$ meaning, 
$$\prod \sigma_{i} = 1.$$ Evidently,  
$$\prod \frac{1}{\sigma_{i}} = 1.$$
Now consider the set $\{\sigma_{1},\sigma_{2},\dots,\sigma_{d}\}$ ordered from least to greatest. We will do induction on elements of this set. For our base case we consider $\{\sigma_1\}$. We have 
$$\sigma_1^0 \geq \sigma_1 * \frac{1}{\sigma_1}$$
Now for the inductive step choose some $n < d$ and suppose that 
$$\left(\frac{\sum^{n}_{i} \sigma_{i}}{n}\right)^{n-1} \geq \prod_{i}^{n}\sigma_{n}* \frac{\sum_{i}^{n}\frac{1}{\sigma_{i}}}{n}.$$
We wish to show, 
$$\left(\frac{\sigma_{n+1} + \sum^{n}_{i} \sigma_{i}}{n+1}\right)^{n} \geq \sigma_{n+1}\left(\prod_{i}^{n}\sigma_{i}* \frac{\sum_{i}^{n}\frac{1}{\sigma_{i}}}{n+1}\right) + \frac{\prod_{i}^{n}\sigma_{i}}{n+1}.$$
Letting 
$$f(\sigma_{n+1}) \coloneqq \left(\frac{\sigma_{n+1} + \sum^{n}_{i} \sigma_{i}}{n+1}\right)^{n} - \sigma_{n+1}\left(\prod_{i}^{n}\sigma_{i}* \frac{\sum_{i}^{n}\frac{1}{\sigma_{i}}}{n+1}\right)  - \frac{\prod_{i}^{n}\sigma_{i}}{n+1}$$
and evaluating at $\frac{\sum^{n}_{i} \sigma_{i}}{n}$, 
$$f\left(\frac{\sum^{n}_{i} \sigma_{i}}{n}\right) = \left(\frac{\sum^{n}_{i} \sigma_{i}}{n}\right)^{n} - \left(\frac{\sum^{n}_{i} \sigma_{i}}{n}\right)\left(\prod_{i}^{n}\sigma_{i}*\frac{\sum_{i}^{n}\frac{1}{\sigma_{i}}}{n+1}\right) - \frac{\prod_{i}^{n}\sigma_{i}}{n+1}.$$
Now by the AM-HM-inequality,
$$\frac{\sum_{i}^{n}\sigma_{i}}{n}\frac{\sum_{i}^{n}\frac{1}{\sigma_{i}}}{n} \geq 1$$
so, 
$$\hspace{-1.2cm}f\left(\frac{\sum^{n}_{i} \sigma_{i}}{n}\right) \geq \left(\frac{\sum^{n}_{i} \sigma_{i}}{n}\right)^{n} - \left(\frac{\sum^{n}_{i} \sigma_{i}}{n}\right)\left(\prod_{i}^{n}\sigma_{i}*\frac{\sum_{i}^{n}\frac{1}{\sigma_{i}}}{n+1}\right) - \left(\frac{\sum_{i}^{n}\sigma_{i}}{n}\right)\left(\prod_{i}^{n}\sigma_{i}*\frac{\sum_{i}^{n}\frac{1}{\sigma_{i}}}{(n)(n+1)}\right)$$
$$f\left(\frac{\sum^{n}_{i} \sigma_{i}}{n}\right) \geq \left(\frac{\sum^{n}_{i} \sigma_{i}}{n}\right)^{n} - \left(\frac{\sum^{n}_{i} \sigma_{i}}{n}\right)\left(\prod_{i}^{n}\sigma_{i}*\frac{\sum_{i}^{n}\frac{1}{\sigma_{i}}}{n}\right).$$
By the inductive assumption the above is nonnegative. 
Now we take the derivative of $f$; it is, 
$$f'(\sigma_{n+1}) = \frac{n}{n+1}\left[\left(\frac{\sigma_{n+1} + \sum^{n}_{i} \sigma_{i}}{n+1}\right)^{n-1} - \left(\prod_{i}^{n}\sigma_{i}* \frac{\sum_{i}^{n}\frac{1}{\sigma_{i}}}{n}\right)\right].$$
Since $\sigma_{n+1}$ is greater than all the previous $\sigma_{i}$s, it is greater than their arithmetic mean. So, 
$$\frac{\sigma_{n+1} + \sum^{n}_{i} \sigma_{i}}{n+1} \geq \frac{\sum^{n}_{i} \sigma_{i}}{n}.$$
Thus, 
$$f'(\sigma_{n+1}) \geq \frac{n}{n+1}\left[\left(\frac{\sum^{n}_{i} \sigma_{i}}{n}\right)^{n-1} - \left(\prod_{i}^{n}\sigma_{i}* \frac{\sum_{i}^{n}\frac{1}{\sigma_{i}}}{n}\right)\right].$$
By the inductive assumption this is also nonnegative. 
Altogether we have that $f\geq 0$ at $\frac{\sum^{n}_{i} \sigma_{i}}{n}$ and it's derivative is always nonnegative. This implies that evaluated at $\sigma_{n+1} \geq \frac{\sum^{n}_{i} \sigma_{i}}{n}$, we have $f\geq 0$. 
This proves the inductive hypothesis. 
So we find, 
$$\left(\frac{\sum^{d}_{i} \sigma_{i}}{d}\right)^{d-1} \geq \prod_{i}^{d}\sigma_{i}* \frac{\sum_{i}^{d}\frac{1}{\sigma_{i}}}{d}.$$
Substituting the norms, 
$$\left(\frac{\left\lVert A \right\rVert^{2}_{H,S}}{d}\right)^{d-1} \geq \prod_{i}^{d}\sigma_{i}* \frac{\left\lVert A^{-1} \right\rVert^{2}_{H,S}}{d}.$$
Since $\prod_i^{d} \sigma_i = 1$ this gives the desired result,
$$\frac{\left(\left\lVert A \right\rVert^{2}_{H,S}\right)^{d-1}}{d^{d-2}} \geq \left\lVert A^{-1} \right\rVert^{2}_{H,S}.$$

\bibliographystyle{plain} % We choose the "plain" reference style

\bibliography{Paper_Files/EvansFunction} % Entries are in the refs.bib file

\end{document}